\documentclass[11pt]{article}

\usepackage[margin=1.27in]{geometry}

\usepackage[misc]{ifsym} 
\usepackage{lipsum}
\usepackage{amsmath}
\usepackage{amssymb}
\usepackage{amsfonts}
\usepackage{latexsym}
\usepackage{bm}
\usepackage{enumitem}
\usepackage{graphicx}
\usepackage{amscd}
\usepackage{extarrows}
\usepackage{indentfirst}
\usepackage{bbm}
\usepackage{mathdots}
\usepackage{mathtools}
\usepackage[linktocpage]{hyperref}
\usepackage{amsthm}
\usepackage{upgreek}
\usepackage{xcolor} 
\hypersetup{
  colorlinks,
  citecolor=black,
  filecolor=black,
  linkcolor=blue,
  urlcolor=black
}

\usepackage{quiver}
\usepackage[multiple]{footmisc}
\usepackage[title]{appendix}

\usepackage{hhline}
\usepackage{tabularx}
\usepackage{diagbox}
\usepackage{array}
\usepackage{tikz}
\usepackage{colortbl}
\usepackage{multirow}
\usepackage{bbding}
\usepackage{fontawesome5}

\usepackage{float}

\theoremstyle{plain}
\newtheorem{theorem}{Theorem}[section]
\newtheorem{proposition}[theorem]{Proposition}
\newtheorem{lemma}[theorem]{Lemma}
\newtheorem{corollary}[theorem]{Corollary}
\newtheorem*{theorem*}{Theorem}
\newtheorem*{lemma*}{Lemma}

\theoremstyle{remark}
\newtheorem{remark}[theorem]{Remark}

\theoremstyle{definition}
\newtheorem{definition}{Definition}[section]
\newtheorem{example}[theorem]{Example}

\newcommand*\dd{\mathop{}\!\mathrm{d}}

\DeclareMathOperator{\End}{End}
\DeclareMathOperator{\Ker}{Ker}

\DeclareMathOperator{\Hom}{Hom}
\DeclareMathOperator{\spn}{span}

\DeclareMathOperator{\tr}{tr}

\DeclareMathOperator{\Tr}{Tr}

\DeclareMathOperator{\Ind}{Ind}

\DeclareMathOperator{\Res}{Res}

\DeclareMathOperator{\GL}{GL}

\DeclareMathOperator{\U}{U}
\DeclareMathOperator{\GU}{GU}
\DeclareMathOperator{\nrd}{nrd}

\DeclareMathOperator{\Ot}{O}

\DeclareMathOperator{\GO}{GO}
\DeclareMathOperator{\GSp}{GSp}
\DeclareMathOperator{\JL}{JL}

\DeclareMathOperator{\Mp}{Mp}

\DeclareMathOperator{\cusp}{cusp}

\DeclareMathOperator{\fin}{fin}

\DeclareMathOperator{\disc}{disc}

\DeclareMathOperator{\irr}{Irrep}

\DeclareMathOperator{\Sp}{Sp}

\title{The Global Jacquet--Langlands Correspondence via Tensor Products}
\author{Jun Yang}
\date{}

\begin{document}
\maketitle

\begin{abstract}
We prove that the global Jacquet--Langlands correspondence $\JL$ for $\GL(2)$ can be realized via tensor products over Hecke algebras.
Let $G$ be a non-split inner form of $\GL(2)$ over a number field.
Using the similitude theta correspondence, the space $L^2(D(\mathbb{A})\times \mathbb{A}^{\times})$ acquires the structure of a $G(\mathbb{A})$--$(G(\mathbb{A})\times \GL(2,\mathbb{A}))$ bimodule such that
\begin{equation*}
L^2(G(F)\backslash G(\mathbb{A}),\chi) \otimes_{\mathcal{H}(G)} L^2(D(\mathbb{A})\times \mathbb{A}^{\times})
\;\cong\;
\bigoplus_{\pi \in \irr_{\mathcal{A}}(G,\chi^{-1})} \pi \otimes \JL(\pi). 
\end{equation*}
This decomposition into irreducible representations of $G(\mathbb{A})\times \GL(2,\mathbb{A})$ recovers the full global Jacquet--Langlands correspondence.
\end{abstract}

\tableofcontents

\section{Introduction}

The theta correspondence and Howe duality provide one of the most powerful mechanisms for transferring representations between different reductive groups over a local field.
Given a dual pair $(G,H)$ inside a symplectic group, the Weil representation realizes a correspondence between irreducible admissible representations of $G$ and $H$, known as the \emph{theta correspondence}.
\emph{Howe duality} asserts that, under suitable hypotheses, the small theta lift establishes a bijection between appropriate subsets of the admissible duals of $G$ and $H$.
Since its inception, Howe duality has played a central role in representation theory and automorphic forms, serving as a fundamental source of instances of Langlands functoriality.

The classical theory of the theta correspondence usually concerns isometry groups.
However, many arithmetic applications \cite{HarrisKudla1992}, especially those involving automorphic representations and global transfer problems, naturally require passing to \emph{similitude groups}.
To address this, Roberts \cite{RobertBrooks1996} and later Gan--Takeda \cite{GanTak2011}, Gan--Tantono \cite{GanTan2014}, and Baki\'{c}--Gan--Savin \cite{BGS2024} developed the theory of the \emph{induced Weil representation}, which extends the theta correspondence to similitude dual pairs.
In this framework, Howe duality continues to hold, and the resulting similitude theta correspondence exhibits strong local and global compatibility properties.

The guiding philosophy of this paper is that the \emph{Jacquet--Langlands correspondence} \cite{JL70} for $\GL(2)$ should be viewed as a concrete manifestation of Howe duality for a specific quaternionic similitude dual pair.
More precisely, we show that the global Jacquet--Langlands transfer arises naturally from the theta correspondence for the dual pair $D^{\times}$ and $(D^{\times}\times \GL(2))/F^{\times}$,
where $D$ is a quaternion division algebra over a number field $F$.
In this setting, the representation-theoretic content of the Jacquet--Langlands correspondence is encoded in the structure of the induced Weil representation and its behavior as a module over suitable Hecke algebras.
Therefore, the central technical theme of this paper is the realization of theta lifts through tensor products over certain algebras, inspired by the work of Goffeng--Mesland--Sengun on local/global group $C^*$-algebras \cite{GMS2024} and the work of Loke--Przebinda on local Hecke algebras \cite{LokePrze2024}.

Let $G=D^{\times}$ and define the global Hecke algebra $\mathcal{H}(G)$ of $G$ by the restricted tensor product over the characteristic functions of maximal compact subgroups (at finite places), i.e.,
$\mathcal{H}(G)=\otimes_v{}' \mathcal{H}(G_v)$.
Here $\mathcal{H}(G_v)$ is the usual Hecke algebra in the non-archimedean case, and the convolution algebra $\mathcal{H}(\mathfrak{g}_v,K_v)$ in the archimedean case.

Let $\Omega_{\mathbb{A}}$ be the global induced Weil representation associated with the above similitude dual pair.
We show that for an irreducible admissible representation $\pi$ of $G(\mathbb{A})$, the similitude big theta lift of $\pi$ can be realized as the tensor product of modules over the global Hecke algebra $\mathcal{H}(G)$:
\[
\Theta(\pi) \;\cong\; \pi^{\vee} \otimes_{\mathcal{H}(G)} \Omega_{\mathbb{A}}.
\]
This tensor description holds both locally and globally, provided Howe duality holds at every place.

Our main result applies this formalism to the global Jacquet--Langlands correspondence for $\GL(2)$.
Let $\pi$ be an irreducible automorphic representation of $D^{\times}(\mathbb{A})$ with central character $\chi$, and let $\JL(\pi)$ denote its Jacquet--Langlands transfer to $\GL(2,\mathbb{A})$.
Using the irreducibility of local big theta lifts for the quaternionic similitude dual pair and the local--global compatibility of the similitude theta correspondence, we prove that there is an isomorphism of representations
\[
\pi^{\vee} \otimes_{\mathcal{H}(G)} \Omega_{\mathbb{A}}
\;\cong\;
\pi \otimes \JL(\pi)
\]
of $D^{\times}(\mathbb{A}) \times \GL(2,\mathbb{A})$.
Equivalently, the Jacquet--Langlands correspondence is realized by a universal tensor construction governed by Howe duality. 
As a consequence, the correspondence appears naturally at the level of automorphic spectra:
\[
L^2(G(F)\backslash G(\mathbb{A}),\chi) \otimes_{\mathcal{H}(G)} L^2(D(\mathbb{A})\times \mathbb{A}^{\times})
\;\cong\;
\bigoplus_{\pi \in \irr_{\mathcal{A}}(G,\chi^{-1})} \pi \otimes \JL(\pi),
\]
where $\irr_{\mathcal{A}}(G,\chi^{-1})$ denotes the automorphic representation of $G(\mathbb{A})$ with the central character $\chi^{-1}$. 
This perspective highlights the Jacquet--Langlands correspondence as an instance of theta functoriality, rather than an isolated phenomenon.

Beyond providing a new conceptual proof of the Jacquet--Langlands correspondence for $\GL(2)$, the results of this paper emphasize the structural role of Howe duality in global representation theory.
The tensor-product realization presented here avoids trace formulas and treats archimedean and non-archimedean places uniformly.
It also suggests a general strategy for realizing other instances of functorial transfer arising from theta correspondences via Hecke-module tensor products.

\medskip
\noindent
\textbf{Organization of the paper.}
Section~\ref{sltheta} reviews Howe duality and the classical theory of the theta correspondence.
In Section~\ref{slocthetasim}, we summarize the theta correspondence for similitude groups via the induced Weil representation.
Section~\ref{stentheta} establishes the tensor-product realization of local and global similitude theta lifts.
Finally, Section~\ref{stenJL} applies this framework to obtain a tensor realization of the global Jacquet--Langlands correspondence for $\GL(2)$.

\medskip
\noindent
\textbf{Acknowledgments.} The author would like to thank Mehmet Haluk \c{S}eng\"un for his insightful comments during our discussions and collaboration.

\section{Howe duality and theta correspondences}\label{sltheta}

We first recall the formal statement of Howe duality.
Let $G,H$ be a pair of reductive groups over a local field $F$.
Let $(\rho,S)$ be a smooth representation of $G\times H$.
For $\pi\in \irr(G)$ (the set of equivalence classes of irreducible admissible representations of $G$), we set
\begin{equation}\label{eSpi}
    S(\pi)=S/\bigcap\limits_{T\in \Hom_{G}(S,\pi)}\Ker(T),
\end{equation}
which is a smooth representation of $G\times H$.
There exists a smooth $H$-representation $\Theta(\pi)$ (the \emph{big theta lift} of $\pi$) such that
\begin{equation}\label{eHowe}
S(\pi)\cong \pi\otimes\Theta(\pi),
\end{equation}
when $(\rho,S)$ is the Weil representation (for archimedean $F$, see \cite[Section~2]{Howe89}; for non-archimedean $F$, see \cite[Lemma~III.3]{MVW87}), or the induced Weil representation for similitude groups (see \cite[Section~3]{BGS2024}).
We define two families of irreducible representations of $G$ and $H$:
\begin{itemize}
\item $\mathcal{R}(G)=\{\pi\in\irr(G)\mid S(\pi)\neq 0\}$ and $\mathcal{R}(H)=\{\tau\in\irr(H)\mid S(\tau)\neq 0\}$.
\end{itemize}
Note that $\Theta(\pi)$ admits a maximal semisimple quotient $\theta(\pi)$ (the \emph{small theta lift} of $\pi$).
We say that \emph{Howe duality} holds if the graph
\[
\mathcal{R}(G\times H)=\{(\pi,\tau)\in \mathcal{R}(G)\times \mathcal{R}(H)\mid \Hom_{G\times H}(\rho,\pi\otimes\tau)\neq 0\}
\]
defines a bijection between $\mathcal{R}(G)$ and $\mathcal{R}(H)$ given by $\pi \mapsto \theta(\pi)$.

We will consider theta correspondences where $(\rho,S)$ is taken to be the Weil representation.
We mainly follow \cite{PrasadD1993,Gan2024} for this review.
Let $F$ be a local field and $W$ be a symplectic space over $F$ equipped with a symplectic form $\langle~,~\rangle$.
Let $H(W)$ be the \emph{Heisenberg group}, defined on the set $W\times F$ with group multiplication
\[
(w_1,z_1)\cdot (w_2,z_2)=(w_1+w_2,z_1+z_2+\tfrac{1}{2}\langle w_1,w_2\rangle).
\]
Let $W=W_1\oplus W_2$ be a decomposition into maximal isotropic subspaces.

For a nontrivial unitary additive character $\psi\colon F\to \U(1)$, there is a unique (up to equivalence) irreducible representation $\rho_{\psi}$ of $H(W)$ with central character $\psi$, called the \emph{Heisenberg representation}, which can be realized as follows.
Let $\mathcal{S}=\mathcal{S}(W_1)$ be the space of smooth functions (archimedean $F$) or locally constant compactly supported functions (non-archimedean $F$) on $W_1$.
The representation $\rho_{\psi}$ acts on $\mathcal{S}$ by
\begin{equation*}
\begin{aligned}
\rho_{\psi}((x,0,0))\phi(w_1)&=\phi(w_1+x);\\
\rho_{\psi}((0,y,0))\phi(w_1)&=\psi(\langle w_1,y\rangle)\phi(w_1);\\
\rho_{\psi}((0,0,a))\phi(w_1)&=\psi(a)\phi(w_1).
\end{aligned}
\end{equation*}
This is called the \emph{Schr\"odinger model} of the Heisenberg representation. 
The symplectic group
\[
\Sp(W)=\{g\in \GL(W)\mid \langle gw_1,gw_2\rangle=\langle w_1,w_2\rangle \text{ for all } w_1,w_2\in W\}
\]
acts on $H(W)$ by $g\cdot (w,t)=(gw,t)$.
Thus, there exists an operator $\omega_{\psi}(g)$ (well-defined up to a scalar) on $\mathcal{S}$ such that
\begin{equation}\label{emetagrp}
\rho_{\psi}(gw,t)\cdot \omega_{\psi}(g)=\omega_{\psi}(g)\cdot \rho_{\psi}(w,t)
\end{equation}
for $(w,t)\in H(W)$.
The \emph{metaplectic group} is defined by
\[
\widetilde{\Sp}_{\psi}(W)=\{(g,\omega_{\psi}(g))\mid \rho_{\psi}(gw,t)\,\omega_{\psi}(g)=\omega_{\psi}(g)\,\rho_{\psi}(w,t),\ \forall (w,t)\in H(W)\},
\]
which fits into the exact sequence $1\to \mathbb{C}^{\times}\to \widetilde{\Sp}_{\psi}(W)\to \Sp(W)\to 1$.
The representation
\[
\omega_{\psi}\colon (g,\omega_{\psi}(g))\mapsto \omega_{\psi}(g)\in \GL(\mathcal{S})
\]
is called the \emph{Weil representation} (or the \emph{oscillator representation}).

Given a pair of reductive subgroups $(G,G')$ of $\Sp(W)$, suppose $G$ and $G'$ are mutual centralizers in $\Sp(W)$; we then call them a \emph{dual pair}.
For an irreducible representation $(\pi,V_{\pi})$ of $G$ (resp.\ $G'$), the representation $\omega_{\psi}$ has a maximal $\pi$-isotypic quotient $\mathcal{S}(\pi)$.
As in \eqref{eHowe}, it is known that
\begin{equation}\label{eThetaQuoLocal}
\mathcal{S}(\pi)\cong \pi\otimes \Theta_{\psi}(\pi)
\end{equation}
for some smooth representation $\Theta_{\psi}(\pi)$ of $G'$.
Thus, we obtain the big theta lift $\Theta_{\psi}(\pi)$ of $\pi$ and its maximal semisimple quotient $\theta_{\psi}(\pi)$ (the small theta lift of $\pi$).

\begin{theorem}
Howe duality holds for the Weil representation $(\rho,\mathcal{S})$.
\end{theorem}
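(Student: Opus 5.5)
This theorem is a deep known result rather than something to be re-derived here. The plan is to reduce it to the literature, splitting by the type of the local field $F$, and then check that the cited statements match the formulation of Howe duality given above.

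\emph{Archimedean case.} For $F$ archimedean, we invoke Howe's theorem \cite{Howe89}. He works in the category of Harish-Chandra modules: he takes the $K$-finite vectors of $\mathcal{S}$ (the Fock model) and proves the duality using the joint harmonics and the structure of the $\mathfrak{sp}$-action. Two translations are needed.
\begin{enumerate}
\item The quotient $\mathcal{S}(\pi)$ in \eqref{eSpi} must be interpreted on the level of $(\mathfrak{g},K)$-modules, which is exactly the setting of \cite{Howe89}.
\item Howe proves that $\Theta(\pi)$ has a unique irreducible quotient, and that the correspondence of these quotients is a bijection between $\mathcal{R}(G)$ and $\mathcal{R}(G')$. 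This is the graph condition for $\mathcal{R}(G\times G')$.
\end{enumerate}

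\emph{Non-archimedean case.} The most efficient route is to cite the complete proofs: Waldspurger for odd residual characteristic, and Gan--Takeda together with Gan--Sun for $p=2$. The key steps in those arguments, in order, are:
\begin{enumerate}
\item Use \cite[Lemma~III.3]{MVW87} to get the decomposition \eqref{eThetaQuoLocal} with $\Theta_\psi(\pi)$ of finite length.
\item Use Kudla's filtration of the Jacquet modules of $\omega_\psi$ to reduce the supercuspidal case to an irreducibility statement.
\item Apply the doubling see-saw with the MVW involution. Together with the local theta dichotomy / conservation relation, this shows that $\Hom_{G\times G'}(\omega_\psi,\pi\otimes\tau)$ has dimension at most $1$ and that $\theta_\psi(\pi)$ is irreducible (or zero).
\end{enumerate}

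\emph{Main obstacle.} The hard step is the non-supercuspidal, non-archimedean case, especially the bound $\dim\Hom_{G\times G'}(\omega_\psi,\pi\otimes\tau)\le 1$ when $\pi$ is a subquotient of a degenerate principal series. This is where Gan--Takeda's argument, based on the doubling zeta integral and the structure of the degenerate principal series, is essential.

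Since the paper only uses the theorem as an input, the proof will consist of these citations. The one thing to verify explicitly is that "maximal semisimple quotient" $\theta(\pi)$ as defined here coincides with the "unique irreducible quotient" in the cited works, which follows from their irreducibility results.
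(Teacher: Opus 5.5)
Your proposal is correct and takes the same route as the paper, which gives no argument of its own: it attributes the archimedean case to Howe, and the non-archimedean case to Waldspurger for odd residual characteristic and to Gan--Takeda and Gan--Sun in general. The differences are cosmetic. The paper cites \cite{Howe77} rather than \cite{Howe89} for the archimedean case, and your sketch of the internal steps of the cited proofs is not needed. That sketch is also slightly loose: Gan--Sun treat the quaternionic dual pairs in all residual characteristics, not specifically $p=2$, while Gan--Takeda treat the symplectic--orthogonal and unitary pairs.
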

This result was proved in the archimedean case by Howe \cite{Howe77}.
The non-archimedean case was proved by Waldspurger \cite{Waldspurger1989} when the residue characteristic of $F$ is not $2$, and later in full generality by Gan and Takeda \cite{GanTak2016} and Gan and Sun \cite{GanSun2017}.

Now let $F$ be a number field and consider global theta lifts.
Let $\mathcal{O}$ be the ring of integers of $F$.
Let $W$ be a symplectic space over $F$ with a maximal isotropic decomposition $W=W_1\oplus W_2$.
For each place $v$ of $F$, let $F_v$ be the completion and $\mathcal{O}_v$ the ring of integers of $F_v$ (for non-archimedean $v$).
Fix a standard symplectic basis $\{e_1,\dots,e_n,f_1,\dots,f_n\}$ of $W$, and let
\[
L=\langle e_1,\dots,e_n,f_1,\dots,f_n\rangle_{\mathcal{O}}
\]
be the $\mathcal{O}$-lattice generated by this basis.

Let $W_v=W\otimes_F F_v$ and $L_v=L\otimes_{\mathcal{O}}\mathcal{O}_v$.
Define $H^{o}(W_v)=L_v\oplus \mathcal{O}_v$, a compact open subgroup of $H(W_v)$.
The adelic Heisenberg group is the restricted product $H(W_{\mathbb{A}})=\prod'(H(W_v),H^{o}(W_v))$.
Let $J_v$ be the stabilizer of $H^{o}(W_v)$ inside $\Sp(W_v)$.
The adelic symplectic group $\Sp(W_{\mathbb{A}})$ is the restricted product $\prod'(\Sp(W_v);J_v)$.
Fix a global additive character $\psi\colon \mathbb{A}/F\to \U(1)$ and define the adelic metaplectic group $\widetilde{\Sp}_{\psi}(W_{\mathbb{A}})$ similarly to \eqref{emetagrp}, which satisfies
\[
1\to \U(1)\to \widetilde{\Sp}_{\psi}(W_{\mathbb{A}}) \to \Sp(W_{\mathbb{A}})\to 1.
\]
We mention that $\widetilde{\Sp}_{\psi}(W_{\mathbb{A}})$ is a quotient of the restricted product $\prod'(\widetilde{\Sp}_{\psi_v}(W_v); J_v)$ by a central subgroup.
Let
\[
\omega_{\psi}\colon \widetilde{\Sp}_{\psi}(W_{\mathbb{A}}) \to \GL(\mathcal{S}(W_{1,\mathbb{A}}))
\]
be the \emph{global Weil representation}.

Let $(G(\mathbb{A}),H(\mathbb{A}))$ be a dual pair in $\widetilde{\Sp}_{\psi}(W_{\mathbb{A}})$.
The \emph{global theta correspondence} can be constructed as follows.
For $\varphi\in \mathcal{S}(W_{1,\mathbb{A}})$, we define the theta function attached to $\varphi$ by
\[
\theta(\varphi)(g)=\sum_{x\in W_{1}(F)}(\omega_{\psi}(g)\varphi)(x),
\]
which is left $\Sp(W)(F)$-invariant on $\Sp(W_{\mathbb{A}})$.
Assume $\pi$ is a cuspidal automorphic representation of $G(\mathbb{A})$, i.e.\ $(\pi,V_{\pi})$ is an irreducible subrepresentation of $G(\mathbb{A})$ contained in $L^2_{\cusp}(G(F)\backslash G(\mathbb{A}))$.
For $f\in V_{\pi}$ and $\varphi\in \mathcal{S}(W_{1,\mathbb{A}})$, define the \emph{global theta lift} associated with $f$ and $\varphi$ by
\[
\theta(\varphi,f)(h)=\int_{G(F)\backslash G(\mathbb{A})}\theta(\varphi)(hg)\overline{f(g)}\,\dd g
\]
for $h\in H(\mathbb{A})$.
Define
\[
V(\pi)=\spn\{\theta(\varphi,f)\mid f\in V_{\pi},\ \varphi\in  \mathcal{S}(W_{1,\mathbb{A}})\},
\]
an automorphic representation of $H(\mathbb{A})$, called the \emph{global theta lift} of $\pi$.

\begin{theorem}\label{tlgcomp}
Assume local Howe duality holds for the dual pair $(G,H)$ at every place of $F$.
Let $\pi=\otimes'_v\pi_v$ be a cuspidal automorphic representation of $G(\mathbb{A})$ and write $\theta(\pi)=\otimes_v \theta(\pi_v)$.
If the global theta lift $V(\pi)$ is nonzero and is contained in $L^2(H(F)\backslash H(\mathbb{A}))$, then
\[
V(\pi)\cong \theta(\pi).
\]
\end{theorem}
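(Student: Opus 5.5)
The plan is the standard local--global argument: realize $V(\pi)$ as a quotient of the global Weil representation tensored with $\pi^\vee$, factor through the local big theta lifts, and use unitarity (the $L^2$ hypothesis) to pass to the small theta lifts.

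\textbf{Step 1: an equivariant surjection.} Consider the bilinear map
\[
\mathcal{S}(W_{1,\mathbb{A}})\times V_\pi\to V(\pi),\qquad (\varphi,f)\mapsto\theta(\varphi,f).
\]
By the equivariance of $\theta(\varphi)$ under $\omega_\psi$ and the invariance of the measure on $G(F)\backslash G(\mathbb{A})$, this map is $G(\mathbb{A})$-balanced. Since $\overline{f}$ ranges over the contragredient $\pi^\vee\cong\bar\pi$, we obtain a nonzero surjection
\[
\omega_\psi\otimes\overline{\pi}\twoheadrightarrow V(\pi)
\]
which is $G(\mathbb{A})$-invariant and $H(\mathbb{A})$-equivariant. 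Equivalently, it is a nonzero element of $\Hom_{G(\mathbb{A})\times H(\mathbb{A})}(\omega_\psi,\pi\otimes V(\pi)^{\vee})$, perhaps after twisting by complex conjugation.

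\textbf{Step 2: factoring through local big theta lifts.} Write $\omega_\psi=\otimes'_v\omega_{\psi_v}$ and $\pi=\otimes'_v\pi_v$. Any $G(\mathbb{A})$-equivariant quotient of $\omega_\psi$ onto a $\pi$-isotypic space factors through $\mathcal{S}(\pi)=\otimes'_v\mathcal{S}_v(\pi_v)\cong\pi\otimes\otimes'_v\Theta(\pi_v)$, using \eqref{eThetaQuoLocal} at each place. Here one must check that the maximal isotypic quotient of a restricted tensor product is the restricted tensor product of the local ones. This follows because spherical vectors match at almost all places and $\Hom$ spaces factor for admissible irreducible $\pi$. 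Hence $V(\pi)$ is a nonzero quotient of $\otimes'_v\Theta(\pi_v)$.

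\textbf{Step 3: passing to small theta lifts.} Since $V(\pi)\subset L^2(H(F)\backslash H(\mathbb{A}))$ and is generated by automorphic forms, it is a unitarizable admissible representation, hence semisimple. Therefore $V(\pi)$ is a quotient of the maximal semisimple quotient of $\otimes'_v\Theta(\pi_v)$. By Howe duality at every place, each $\Theta(\pi_v)$ has a unique irreducible quotient $\theta(\pi_v)$. Local unramified computations give $\Theta(\pi_v)^{K_v}$ generated by the spherical vector for almost all $v$, so the maximal semisimple quotient of the restricted product is the irreducible $\theta(\pi)=\otimes_v\theta(\pi_v)$. Since $V(\pi)\neq0$, we conclude $V(\pi)\cong\theta(\pi)$.

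\textbf{Main obstacle.} The delicate step is Step 3: showing that a semisimple quotient of the infinite restricted tensor product $\otimes'_v\Theta(\pi_v)$ must be $\otimes_v\theta(\pi_v)$. My approach would be to take an irreducible summand $\sigma=\otimes_v\sigma_v$ of $V(\pi)$. Then $\Hom(\omega_\psi,\pi\otimes\sigma)\ne0$ restricts to nonzero local maps $\omega_{\psi_v}\to\pi_v\otimes\sigma_v$, fixing the spherical vectors outside a finite set. This gives $\sigma_v\cong\theta(\pi_v)$ by Howe duality for every $v$. Multiplicity one follows because the local $\Hom$ spaces are one-dimensional, which is again a consequence of Howe duality, namely uniqueness of the irreducible quotient.
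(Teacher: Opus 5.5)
Your proposal is correct and is essentially the standard argument the paper relies on; the paper gives no proof of its own and only cites Gan's Proposition~3.1. The steps are: $V(\pi)$ is a quotient of $\omega_\psi\otimes\overline{\pi}$ and is semisimple by the $L^2$ hypothesis; every irreducible summand $\sigma$ satisfies $\sigma_v\cong\theta(\pi_v)$ at all $v$ by local Howe duality; and multiplicity one follows from the one-dimensionality of the local $\Hom$ spaces together with nonvanishing on spherical vectors. Two small slips: in Step~1 the relevant space is $\Hom_{G(\mathbb{A})\times H(\mathbb{A})}(\omega_\psi,\pi\otimes V(\pi))$, not $\pi\otimes V(\pi)^{\vee}$ (and you use the correct form later), and your final paragraph makes the less-justified restricted-tensor-product claims of Steps~2--3 unnecessary.
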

This gives the compatibility of the local and global theta correspondences.
See \cite[Proposition~3.1]{Gan2024} for a proof.

\section{Theta correspondences for similitude groups}\label{slocthetasim}

We follow \cite{BGS2024} for a general description of the similitude theta correspondence.
Then we give two examples: symplectic--orthogonal dual pairs as studied by Roberts \cite{RobertBrooks1996}, and quaternionic dual pairs as studied by Gan and Tantono \cite{GanTan2014}.

Suppose $(G,H)$ is a pair of reductive subgroups inside a simple linear algebraic group $\mathcal{E}$, all defined over a local field $F$.
Assume that $(G,H)$ forms a dual pair in the sense that they are mutual centralizers of each other.
Fix a homomorphism $i\colon G\times H\to\mathcal{E}$, which is not necessarily injective.
Let $Z\hookrightarrow G$ and $Z\hookrightarrow H$ be embeddings of a common central subgroup $Z$ (identified under $i$) such that
\begin{itemize}
    \item $\ker(i)=\Delta(Z):=\{(z,z^{-1})\mid z\in Z\}$,
    \item $(G\times H)/\Delta(Z)\to \mathcal{E}$ is injective.
\end{itemize}
Then the relevant similitude groups can be defined as follows.

\begin{definition}
The \emph{similitude group} $\widetilde{G}$ associated to $G$ and the following data:
\begin{enumerate}
    \item an embedding $j\colon Z\hookrightarrow T:=\Res_{K/F}(\mathbb{G}_m)$ for an \'{e}tale $F$-algebra $K$ of finite rank;
    \item an embedding $T/j(Z)\hookrightarrow \Res_{E/F}(\mathbb{G}_m)$ for some finite extension $E/F$,
\end{enumerate}
is defined by
\begin{center}
    $\widetilde{G}:=(G\times T)/\Delta(Z)$.
\end{center}
It comes with the similitude homomorphism
\begin{center}
    $\lambda_G\colon \widetilde{G}\twoheadrightarrow T/j(Z)\hookrightarrow \Res_{E/F}(\mathbb{G}_m)$.
\end{center}
\end{definition}
\noindent The similitude group $\widetilde{H}$ and the similitude homomorphism $\lambda_{H}$ are defined similarly.

Consider the diagonal map $\Delta\colon T\to \widetilde{G}\times \widetilde{H}$.
We use the following notation:
\begin{itemize}
\item $\omega$ = a minimal representation of $\mathcal{E}(F)$ (see \cite{GanSav2005});
\item $R_0:=\{(g,h)\in \widetilde{G}\times \widetilde{H}\mid\lambda_{G}(g)\cdot \lambda_{H}(h)=1\}$;
\item $R_1:=R_0/\Delta(T)$, which satisfies $R_1\cong (G\times H)/\Delta(Z)$;
\item $\widetilde{G}(F)^+$ := the image of the projection $R_0\to \widetilde{G}(F)$;
\item $\widetilde{H}(F)^+$ := the image of the projection $R_0\to \widetilde{H}(F)$.
\end{itemize}
It is known that $\widetilde{G}(F)^+=\widetilde{G}(F)$ if and only if $\lambda_G(\widetilde{G}(F))\subset \lambda_H(\widetilde{H}(F))$, which holds if $\lambda_H$ is surjective.
We call the pair $(\widetilde{G}(F)^+,\widetilde{H}(F)^+)$ a \emph{similitude dual pair}.

Consider the composition of homomorphisms
\begin{equation}\label{eextWeilgrp}
    R_0\twoheadrightarrow R_1 \hookrightarrow \mathcal{E}(F).
\end{equation}
For the $\mathcal{E}(F)$-representation $\omega$, its pullback along \eqref{eextWeilgrp} gives a representation of $R_0$.
We denote this $R_0$-representation by $\overline{\omega}$, which plays the role of the extended Weil representation in \cite[Section~3]{RobertBrooks1996} and \cite[Section~3]{GanTan2014}.
We then define the induced representation
\begin{equation}\label{eindWeil}
    \Omega=\Ind_{R_0}^{\widetilde{G}(F)^{+}\times\widetilde{H}(F)^{+}}\overline{\omega}.
\end{equation}
For $\pi\in \irr(\widetilde{G}(F)^{+})$, we define
\[
\Omega(\pi)=\Omega\Big/\bigcap\limits_{T\in \Hom_{\widetilde{G}(F)^+}(\Omega,\pi)}\Ker(T).
\]
as in \eqref{eSpi}. 
Then $\Omega(\pi)$ is a smooth representation of $\widetilde{G}(F)^{+}\times\widetilde{H}(F)^{+}$.
There exists a smooth $\widetilde{H}(F)^+$-representation $\Theta(\pi)$ such that $\Omega(\pi)\cong \pi\otimes\Theta(\pi)$; we call $\Theta(\pi)$ the \emph{similitude big theta lift} of $\pi$.
We denote its maximal semisimple quotient by $\theta(\pi)$ and call it the \emph{similitude small theta lift} of $\pi$.
It is known that $\Theta(\pi)\cong \Hom_{G(F)}(\omega,\pi)$ as a smooth representation of $\widetilde{H}(F)^{+}$.

The following result was first proved by Roberts (see \cite[Theorem~4.4]{RobertBrooks1996}) for symplectic--orthogonal dual pairs under the assumption that the restrictions to isometry groups are multiplicity-free, then for the quaternionic cases by Gan and Tantono (see \cite[Proposition~3.3(iv)]{GanTan2014}), and in general by Baki\'{c}, Gan, and Savin (see \cite[Proposition~5.1]{BGS2024}).

\begin{theorem}\label{tsimHowe}
If Howe duality holds for a dual pair $(G(F),H(F))$ over a local field $F$, then Howe duality holds for the associated similitude pair $(\widetilde{G}(F)^+,\widetilde{H}(F)^+)$.
\end{theorem}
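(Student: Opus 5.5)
The plan is to reduce Howe duality for $\Omega=\Ind_{R_0}^{\widetilde{G}(F)^+\times\widetilde{H}(F)^+}\overline{\omega}$ to Howe duality for $\omega$ restricted to $G(F)\times H(F)$. This uses Clifford theory for the finite-index, normal, abelian-quotient inclusions
\[
\Delta(T)\,(G(F)\times H(F))\subset R_0\subset \widetilde{G}(F)^+\times\widetilde{H}(F)^+ .
\]
The first observation is that $\widetilde{G}(F)^+ = T(F)\cdot G(F)$ modulo $Z$, and similarly for $H$. Hence an irreducible $\pi$ of $\widetilde{G}(F)^+$ restricts to $G(F)$ as a finite direct sum of irreducibles, forming one orbit under the conjugation action of $\widetilde{G}(F)^+/T(F)G(F)$. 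The finiteness comes from $\lambda_G(\widetilde{G}(F))/\lambda_G(T(F))$, which is a quotient of $E^\times/E^{\times 2}$-type groups and so is finite.

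\textbf{Step 1.} Compute $\Hom_{\widetilde{G}^+\times\widetilde{H}^+}(\Omega,\pi\otimes\tau)$ by Frobenius reciprocity as $\Hom_{R_0}(\overline{\omega},\pi\otimes\tau|_{R_0})$. Since $\Delta(T)$ acts trivially on $\overline\omega$, this space is nonzero only if the central characters of $\pi$ and $\tau$ match on $T$. When they match, it embeds into
\[
\bigoplus_{\sigma\subset\pi|_{G},\ \sigma'\subset \tau|_H}\Hom_{G\times H}(\omega,\sigma\otimes\sigma').
\]
Conversely, a nonzero $G\times H$-map into some $\sigma\otimes\sigma'$ produces a nonzero $R_0$-map after averaging over the finite group $R_0/\Delta(T)(G\times H)$.

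\textbf{Step 2.} Show that $\pi\mapsto\tau$ is well defined and injective. Suppose $\pi\otimes\tau$ and $\pi\otimes\tau'$ both occur. Then there are constituents $\sigma\subset\pi|_G$ with $\sigma'\subset\tau|_H$ and $\sigma''\subset\tau'|_H$ such that $\theta(\sigma)\cong\sigma'$ and $\theta(\sigma)\cong\sigma''$ up to conjugating $\sigma$ within its orbit. Here we use that an element $(g,h)\in R_0$ conjugating $\sigma$ to ${}^g\sigma$ also sends $\theta(\sigma)$ to ${}^h\theta(\sigma)$, because $\overline\omega$ is $R_0$-equivariant. Therefore $\tau|_H$ and $\tau'|_H$ share a constituent, and the central characters on $T$ agree. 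Together these imply that $\tau'\cong\tau\otimes\mu$ for a character $\mu$ of $\widetilde{H}^+/T H$. The symmetric argument applies on the $G$ side.

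\textbf{Step 3 (main obstacle).} Rule out the twist ambiguity $\tau'\cong\tau\otimes\mu$ with $\mu\neq 1$. This is the point where one must show that $\Hom_{R_0}(\overline\omega,\pi\otimes\tau)$ is at most one-dimensional summed over the twists. It is also where the induction from $R_0$, rather than from a larger group, matters. I would argue via the description $\Theta(\pi)\cong\Hom_{G(F)}(\omega,\pi)$. Writing $\pi|_{G}=\bigoplus_i\sigma_i$ gives
\[
\Theta(\pi)|_{H}\cong\bigoplus_i\Theta(\sigma_i).
\]
By Howe duality for $(G,H)$, the cosocle of this restriction is $\bigoplus_i\theta(\sigma_i)$, which is a single $\widetilde{H}^+$-conjugacy orbit with the correct multiplicities. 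The $\widetilde{H}^+$-cosocle of $\Theta(\pi)$ therefore restricts into this orbit with multiplicity one. A length count, comparing the stabilizers of $\sigma_i$ in $\widetilde{G}^+$ and of $\theta(\sigma_i)$ in $\widetilde{H}^+$ through the identification $R_0/\Delta(T)\cong (G\times H)/\Delta(Z)$, then forces $\theta(\pi)$ to be irreducible. I expect this stabilizer comparison, which shows that the two finite index groups act compatibly on the orbits, to be the delicate part.

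Finally, surjectivity onto $\mathcal{R}(\widetilde{H}^+)$ and the symmetric statement follow by interchanging the roles of $G$ and $H$.
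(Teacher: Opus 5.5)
\paragraph{Where the proposal breaks.} The paper gives no proof of its own here. It cites Roberts, who assumes that restriction from the similitude group to the isometry group is multiplicity-free, together with Gan--Tantono and Baki\'{c}--Gan--Savin, Proposition~5.1, for the general case. Your outline is essentially Roberts' Clifford-theory argument, and it breaks exactly where his hypothesis is used.

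In Step~3 you assert that the $\widetilde{H}^+$-cosocle of $\Theta(\pi)$ restricts into $\bigoplus_i\theta(\sigma_i)$ ``with multiplicity one''. That holds only if $\pi|_{G}$ is multiplicity-free. In general $\pi|_G\cong m\bigoplus_{i=1}^k\sigma_i$ with $m>1$ is possible, and this happens in the very case this paper uses. For $F$ non-archimedean, $D^{\times}/F^{\times}D^1\cong F^{\times}/F^{\times 2}$ is non-cyclic, and there are irreducible $\pi$ of $D^{\times}$ with $\pi|_{D^1}\cong\sigma\oplus\sigma$ (e.g.\ those whose $L$-parameter has projective image the Klein four-group).

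When $m>1$, Frobenius reciprocity only gives $\dim\Hom_H(\Theta(\pi),\theta(\sigma_i))=m$. So a cosocle $\tau\oplus(\tau\otimes\mu)$ with $\tau|_H\cong m'\bigoplus_i\theta(\sigma_i)$ and $2m'\le m$ is consistent with everything you have established. The stabilizer comparison you flag as the delicate point is in fact easy. It follows because $R_0/(G\times H)$ is the graph of the isomorphism $\widetilde G^+/G\cong\widetilde H^+/H$ given by the similitude characters, and $\theta$ is injective. But it only yields equality of the orbit sizes $k$ and says nothing about $m'$ versus $m$.

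\paragraph{The missing idea.} What you need is the multiplicity match $m'=m$.
\begin{enumerate}
\item Howe duality for $(G,H)$ gives $\dim\Hom_{G\times H}(\omega,\sigma\boxtimes\theta(\sigma))=1$.
\item Hence the kernel of this quotient map is stable under the stabilizer $R_\sigma\subset R_0$ of $\sigma$, and $\sigma\boxtimes\theta(\sigma)$ extends to a genuine representation of $R_\sigma$.
\item Consequently the Clifford obstruction class of $\sigma$ on $\mathrm{Stab}_{\widetilde G^+}(\sigma)/TG$ and that of $\theta(\sigma)$ on the isomorphic group $\mathrm{Stab}_{\widetilde H^+}(\theta(\sigma))/TH$ are mutually inverse.
\item All irreducible projective representations of a finite abelian group with a fixed cocycle have the same dimension. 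So every $\tau$ lying over $\theta(\sigma)$ contains it with multiplicity exactly $m$, and the bound $\sum_j m'_j\le m$ forces a single $\tau$.
\end{enumerate}
Equivalently, you can work with the irreducible $R_0$-quotient of $\overline\omega$ lying over the orbit of $\sigma\boxtimes\theta(\sigma)$. Its restriction to $G\times H$ is multiplicity-free, and you then apply Clifford theory to the graph-like subgroup $R_0$.

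\paragraph{Smaller points.} Your ``first observation'' $\widetilde G(F)^+=T(F)G(F)$ is false as stated: $T(F)G(F)$ has only finite index (e.g.\ $\GSp(W)\neq F^{\times}\Sp(W)$). Also, averaging over the finite group $R_0/\Delta(T)(G\times H)$ can give zero. Frobenius reciprocity for that finite-index inclusion only produces a nonzero map into some twist $(\pi\otimes\tau)\otimes\mu$.
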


Here we give two examples where the minimal representation $\omega$ is taken to be the Weil representation.

\begin{example}[symplectic--orthogonal dual pairs]
This is the example studied by Roberts in \cite{RobertBrooks1996}.
We fix a non-archimedean local field $F$.
Let $X$ be a finite-dimensional nondegenerate symplectic space over $F$ with symplectic form $(~,~)_X$, and let $Y$ be a finite-dimensional nondegenerate symmetric space over $F$ with inner product $(~,~)_Y$.
For simplicity, we assume that $\dim_{F}X$ is even, while the odd-dimensional case was also studied.

The space $W=X\otimes Y$ is symplectic with symplectic form
$\langle x_1\otimes y_1,x_2\otimes y_2\rangle=(x_1,x_2)_X\cdot (y_1,y_2)_Y$.
The isometry groups $(\U(X),\U(Y))=(\Sp(X),\Ot(Y))$ form a dual pair in $\Sp(W)$.
In this case, $\ker(i)=\Delta(\mu_2)$, where $\mu_2$ is the group of second roots of unity.
Assume that $X=X_1\oplus X_2$ is a decomposition into maximal isotropic subspaces.
Then $W=W_1\oplus W_2$ with $W_i=X_i\otimes Y$ giving a maximal isotropic decomposition.
The similitude groups are
\begin{center}
    $\widetilde{G}(F)^{+}=\GSp(X)$ and $\widetilde{H}(F)^{+}=\GO(Y)$.
\end{center}

The construction of the extended/induced Weil representation relies on lifting the map
$i\colon \Sp(X)\times\Ot(Y)\to\Sp(X\otimes Y)$ to the metaplectic cover $\Mp(X\otimes Y)$.
This lifting can then be extended to
$R_0=\{(g,h)\in \widetilde{G}\times \widetilde{H}\mid\lambda_{G}(g)\cdot \lambda_{H}(h)=1\}$
to form the extended Weil representation $\overline{\omega}$ of $R_0$, and the induced Weil representation $\Omega$ follows.
Moreover, if we denote the underlying space of $\Omega$ by $\mathcal{T}$, then there is a $\widetilde{G}(F)^{+}\times\widetilde{H}(F)^{+}$-equivariant isomorphism from $\mathcal{T}$ to
$\tilde{\mathcal{S}}=\mathcal{S}(W_1\times F^{\times})$, the space of Schwartz--Bruhat functions on $W_1\times F^{\times}$.
\end{example}

\begin{example}[quaternionic dual pairs]\label{eGTquasim}
Here is a family of quaternionic spaces together with associated isometry and similitude groups, constructed by Gan and Tantono in \cite[Section~2]{GanTan2014}.
Fix a quaternion division algebra $D$ over a non-archimedean field $F$.
\begin{enumerate}
\item For $k\geq 1$, let $V_k=D^k$ be the unique quaternionic Hermitian space with $\dim_{D}V_{k}=k$.
More precisely, $V_1=D$ and $V_2=D e_{1}\oplus D e_{2}$ with inner product given by $\langle e_1,e_2\rangle=1$ and $\langle e_i,e_i\rangle=0$.
Then the Witt towers of these quaternionic Hermitian spaces are
\[
V_{2n}=V_{2}^{\oplus n}\quad \text{and}\quad V_{2n+1}=V_{1}\oplus V_{2}^{\oplus n}.
\]
Thus, the isometry groups are
\[
\U(V_k)=\begin{cases}
\U(V_{2n})=\Sp_{n,n}, &\text{ if } k=2n;\\
\U(V_{2n+1})=\Sp_{n+1,n}, &\text{ if } k=2n+1,
\end{cases}
\]
which are inner forms of $\Sp_{2k}$.
The similitude groups are
\[
\GU(V_k)=\begin{cases}
\GU(V_{2n})=\GSp_{n,n}, &\text{ if } k=2n;\\
\GU(V_{2n+1})=\GSp_{n+1,n}, &\text{ if } k=2n+1.
\end{cases}
\]

\item For $k\geq 2$ and a separable quadratic $F$-algebra $K$ (identified with an element of $F^{\times}/F^{\times2}$), let $W_k^K$ be the unique quaternionic skew-Hermitian space of dimension $k$ and discriminant $K$.
The space $W_2=D f_1\oplus D f_2$ is equipped with $(f_1,f_2)=1$ and $(f_i,f_i)=0$.

For $i=1,2,3$, let $a_i\in F^{\times}/F^{\times2}$ satisfy $a_1 a_2 a_3=1$ in $F^{\times}/F^{\times2}$.
Let $W_3\cong D f_1\oplus D f_2\oplus D f_3$ such that $(f_i,f_i)=a_i$.
For $m\geq 1$, define the skew-Hermitian spaces
\[
W_{2m}=W_2^{\oplus m}\quad \text{and}\quad W_{2m+3}=W_3\oplus W_2^{\oplus m}.
\]
Thus, the isometry groups are
\[
\U(W_l)=\begin{cases}
\U(W_{2m})=\Ot^{m,m}, &\text{ if } l=2m;\\
\U(W_{2m+3})=\Ot^{m+3,m}, &\text{ if } l=2m+3,
\end{cases}
\]
which are inner forms of $\Sp_{2k}$.
The similitude groups are
\[
\GU(W_l)=\begin{cases}
\GU(W_{2m})=\GO^{m,m}, &\text{ if } l=2m;\\
\GU(W_{2m+3})=\GO_{m+3,m}, &\text{ if } l=2m+3.
\end{cases}
\]
\end{enumerate}

Note that $V_k$ is Hermitian and $W_l$ is skew-Hermitian.
Their tensor product $V_{k}\otimes_{D} W_{l}$ is symplectic with symplectic form
\[
\langle v_1\otimes w_1,v_2\otimes w_2\rangle=\tfrac{1}{2}\Tr_{D/F}(\langle v_1,v_2\rangle\cdot\overline{(w_1,w_2)}),
\]
where $v_1,v_2\in V_k$, $w_1,w_2\in W_l$, and $\Tr_{D/F}(x)=x+\overline{x}$ is the canonical trace on $D$.
We obtain a natural map
\[
\U(V_k)\times\U(W_l)\to \Sp(V_{k}\otimes_{D} W_{l}),
\]
which makes $(\U(V_k),\U(W_l))$ a dual pair.
This also induces a natural map
\[
\GU(V_k)\times\GU(W_l)\to \GSp(V_{k}\otimes_{D} W_{l}).
\]

Let $(V,W)$ be of the form $(V_k,W_l)$ above, and let $(\U(V),\U(W))$ be a dual pair of this type.
Then $(\GU(V),\GU(W))$ forms the associated similitude dual pair.
\end{example}

The induced representation $\Omega$ can also be described by an induced representation depending only on $G(F)$ and its associated similitude group.

\begin{proposition}\label{ptwoOmega}
We have $\Omega\cong\Ind_{G(F)}^{\tilde{G}(F)^{+}}(\omega|_{G(F)})$.
\end{proposition}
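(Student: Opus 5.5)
The plan is to read the statement as an isomorphism of $\widetilde{G}(F)^{+}$-representations, where $\Omega$ is restricted along $\widetilde{G}(F)^{+}\hookrightarrow \widetilde{G}(F)^{+}\times\widetilde{H}(F)^{+}$, $g\mapsto (g,1)$. I would derive it from Mackey theory applied to the definition \eqref{eindWeil}. The key point is that there is only one double coset in
\[
(\widetilde{G}(F)^{+}\times 1)\backslash(\widetilde{G}(F)^{+}\times\widetilde{H}(F)^{+})/R_0 .
\]
Indeed, $\widetilde{H}(F)^{+}$ is by definition the image of the projection $R_0\to\widetilde{H}(F)$. So for any $(g,h)$ there is some $g'$ with $(g',h)\in R_0$. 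Then $(g,h)=(g g'^{-1},1)\cdot(g',h)$, so $(\widetilde{G}(F)^{+}\times 1)\cdot R_0$ is the whole group.

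Mackey's formula with a single double coset then gives
\[
\Omega|_{\widetilde{G}(F)^{+}}\cong \Ind_{(\widetilde{G}(F)^{+}\times 1)\cap R_0}^{\widetilde{G}(F)^{+}}\big(\overline{\omega}|_{(\widetilde{G}(F)^{+}\times 1)\cap R_0}\big).
\]
For the kind of induction used in \eqref{eindWeil}, I would not quote Mackey but prove this directly. The map $f\mapsto \big(g\mapsto f(g,1)\big)$ is injective, because every $f\in\Omega$ is determined by its values on $\widetilde{G}(F)^{+}\times 1$ via left $R_0$-equivariance and the factorization above. It is surjective, because a function $\phi$ on $\widetilde{G}(F)^{+}$ with the right equivariance extends to $f(g,h)=\overline{\omega}(g',h)\phi(g g'^{-1})$. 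One must check that this extension is independent of the choice of $g'$. Two choices differ by an element of $(\widetilde{G}(F)^{+}\times 1)\cap R_0$, so independence is exactly the equivariance of $\phi$. Smoothness, and compact support modulo the subgroup if compact induction is meant, transfer in both directions because the factorization can be chosen locally constantly: the projection $R_0\to\widetilde{H}(F)^{+}$ is open onto its image.

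It then remains to identify the stabilizer and the representation on it. We have $(\widetilde{G}(F)^{+}\times 1)\cap R_0=\{(g,1): \lambda_G(g)=1\}$. Since $\lambda_G$ is the composite $\widetilde{G}=(G\times T)/\Delta(Z)\twoheadrightarrow T/j(Z)$, its kernel is $(G\times Z)/\Delta(Z)\cong G$. On $F$-points one has to check that $\ker(\lambda_G)(F)=G(F)$, not merely that it contains it. I would argue this with the exact sequence $1\to Z\to G\times T\to\widetilde{G}\to1$: an $F$-point of $\widetilde{G}$ with trivial similitude lifts, after passing to $\bar F$, to $(g,t)$ with $t\in j(Z)$. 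Absorbing $t$ into $g$ gives a $\bar F$-point of $G$ that is Galois-fixed in $\widetilde{G}$, hence an $F$-point of $G$, because $G\to\widetilde{G}$ is a closed embedding. Finally, $\overline{\omega}$ restricted to $G(F)\times 1$ is by construction the pullback of $\omega$ along $G(F)\to R_1\hookrightarrow\mathcal{E}(F)$, which is $\omega|_{G(F)}$. This proves the proposition.

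The step I expect to be the main obstacle is the function-space bookkeeping in the second paragraph: matching the precise flavour of induction (smooth versus compact, and for archimedean $F$ the appropriate Fréchet or smooth-vector topology), and checking that the restriction map is a topological isomorphism rather than merely an algebraic bijection. For the archimedean case, I would first try to work with $K$-finite or smooth vectors and use the local triviality of the fibration $R_0\to\widetilde{H}(F)^{+}$ with fibre $G(F)$ to transport the topology.
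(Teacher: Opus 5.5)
Your argument is correct in substance, but it takes a genuinely different route from the paper. The paper never works with function spaces. It quotes from \cite{BGS2024} that $\Hom_{G(F)}(\omega,\tilde\pi|_{G(F)})\cong\Hom_{\widetilde{G}(F)^{+}}(\Omega,\tilde\pi)$ for every irreducible $\tilde\pi$, and then appeals to Frobenius reciprocity. You instead write down an explicit isomorphism, $f\mapsto f(\cdot,1)$. It rests on two facts: $R_0$ surjects onto $\widetilde{H}(F)^{+}$, and $R_0\cap(\widetilde{G}(F)^{+}\times 1)=G(F)\times 1$. Note that $R_0$ is normal, being the kernel of $(g,h)\mapsto\lambda_G(g)\lambda_H(h)$, so the left/right coset bookkeeping is harmless. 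With left $R_0$-equivariance, as in $V_{\Omega}$, your extension should read $f(g,h)=\overline{\omega}(g',h)\,\phi(g'^{-1}g)$. The paper's route is shorter, but agreement of $\Hom$-spaces into irreducibles does not by itself determine a representation up to isomorphism. Yours gives a genuine isomorphism, does not depend on \cite{BGS2024}, and produces exactly the model $V_{\Omega}$ used in Section~\ref{stenJL}.

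One claim needs correcting: smoothness does not transfer in both directions for every flavour of induction. Take $k$ in the isometry group $H(F)=\ker\lambda_H$. Then $(1,k)\in R_0$ commutes with $\widetilde{G}(F)^{+}\times 1$, so $f(g,k)=\omega(k)\phi(g)$. Smoothness of $f$ therefore forces \emph{all} values of $\phi$ to be fixed by one open subgroup of $H(F)$. For an open compact $K_1\subset\widetilde{G}(F)^{+}$, the set $G(F)\backslash\widetilde{G}(F)^{+}/K_1\cong\lambda_G(\widetilde{G}(F)^{+})/\lambda_G(K_1)$ is infinite, so this uniformity fails for full smooth induction. For example, for $D^1\subset D^{\times}$ take $\phi(x)=\mathbf{1}_{\varpi_D^{-\nu(x)}\mathcal{O}_D}$, where $\nu$ is the valuation of $D$. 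This $\phi$ lies in the smooth induction $\Ind_{D^1}^{D^{\times}}\omega$. However, its values have shrinking stabilizers under the elements $\left(\begin{smallmatrix}1&b\\0&1\end{smallmatrix}\right)$ of $\mathrm{SL}(2,F)\subset\U(W_2)$, which act by multiplication by $\psi(b\,\nrd(x))$. So $\phi$ is not the restriction of any element of $\Omega$.

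Your proof is therefore valid for compact induction. That is the induction used in \cite{RobertBrooks1996,GanTan2014,BGS2024}, and it is what the model $\mathcal{S}(D\times F^{\times})$ reflects. Even there, the operative reason is not merely that the factorization can be chosen locally constantly. Rather, $\phi$ is determined modulo $G(F)$ by finitely many values $\phi(s_1),\dots,\phi(s_m)$, and $\omega(G(F))$ commutes with $\omega(H(F))$. Hence a small enough open subgroup of $R_0$ fixing these finitely many vectors fixes $f$.
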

\begin{proof}
For any irreducible representation $\tilde{\pi}$ of $\tilde{G}(F)^{+}$, it is known that
\[
\Hom_{G(F)}(\omega,\Res_{G(F)}^{\tilde{G}(F)^{+}}\tilde{\pi})
\;\cong\;
\Hom_{\tilde{G}(F)^{+}}(\Omega,\tilde{\pi})
\]
by \cite[\S 3]{BGS2024}.
By Frobenius reciprocity, we obtain $\Omega\cong\Ind_{G(F)}^{\tilde{G}(F)^{+}}\omega$.
\end{proof}

\section{The tensor product for similitude theta correspondence}\label{stentheta}

We realize both local and global similitude theta lifts via tensor products over suitable Hecke algebras.
From this section on, we write $(G,H)$ for a similitude dual pair when there is no risk of confusion with the ordinary dual pair.

Suppose $F$ is a non-archimedean local field and $G$ is a reductive group over $F$.
Let $C_c^{\infty}(G)$ be the complex vector space of compactly supported smooth functions on $G$.
Recall that the \emph{Hecke algebra} $\mathcal{H}(G)$ is defined to be space $C_c^{\infty}(G)$ together with multiplication given by convolution:
\[
(f_1*f_2)(g)=\int_G f_1(x)\,f_2(x^{-1}g)\,\dd x
\]
for $f_1,f_2\in C_c^{\infty}(G)$.
Given a representation $(\pi,V)$ of $G$, define
\[
\pi(f)v=\int_G f(g)\,\pi(g)v\,\dd g
\]
for $f\in \mathcal{H}(G)$.
This makes $V$ into an $\mathcal{H}(G)$-module.
Thus, the category of smooth representations of $G$ is equivalent to the category of nondegenerate $\mathcal{H}(G)$-modules\footnote{An $\mathcal{H}(G)$-module $M$ is called \emph{nondegenerate} if for every $m\in M$ there exist $n\ge 1$ and $(x_i,m_i)_{1\le i\le n}$ in $\mathcal{H}(G)\times M$ such that $m=\sum_{i=1}^{n}x_{i}m_{i}$.} (see \cite[Lemma~5.3.3]{GetzHahn}).

Now consider the archimedean case, i.e.\ $F=\mathbb{R}$ or $F=\mathbb{C}$.
Let $G$ be a real Lie group with complexified Lie algebra $\mathfrak{g}$ and maximal compact subgroup $K$.
For admissible representations of $G$, we work with the underlying $(\mathfrak{g},K)$-modules, which preserve irreducibility (see \cite[\S 3.4]{Wal}).
Let $\mathcal{H}(\mathfrak{g},K)$ be the convolution algebra of $K$-finite distributions on $G$ supported in $K$.
By \cite[Chapter~I.4, Theorem]{KnVo1995}, for $(\mathfrak{g},K)$-modules $M$ and $N$ we have
\[
\Hom_{(\mathfrak{g},K)}(M,N)=\Hom_{\mathcal{H}(\mathfrak{g},K)}(M,N).
\]
Thus, it suffices to consider modules over $\mathcal{H}(\mathfrak{g},K)$.

The following result is the similitude analogue of the ordinary Weil representation case (see \cite[Theorems~3--4]{LokePrze2024}).
\begin{lemma}\label{ltenHsim}
Let $(G,H)$ be a similitude dual pair over a local field $F$, and let $\Omega$ be the induced representation of $G\times H$.
For an irreducible representation $\pi$ of $G$, we have
\[
\Theta(\pi)\cong\pi^{\vee}\otimes_{\mathcal{H}(G)}\Omega
\quad \text{(resp.\ } \Theta(\pi)\cong\pi^{\vee}\otimes_{\mathcal{H}(\mathfrak{g},K)}\Omega\text{)}
\]
as representations of $H$, according to whether $F$ is non-archimedean (resp.\ archimedean).
\end{lemma}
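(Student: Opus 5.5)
The plan is to identify $\pi^{\vee}\otimes_{\mathcal{H}(G)}\Omega$ with the space of \emph{coinvariants} $(\pi^{\vee}\otimes\Omega)_{G}$, and then to match these coinvariants with the maximal $\pi$-isotypic quotient $\Omega(\pi)$. Throughout I treat the non-archimedean case and indicate the changes for $\mathcal{H}(\mathfrak{g},K)$ at the end.

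\textbf{Step 1: coinvariants.} Make $\pi^{\vee}$ a right $\mathcal{H}(G)$-module via $v\cdot f=\pi^{\vee}(\check f)v$, where $\check f(g)=f(g^{-1})$. Since both $\pi^{\vee}$ and $\Omega$ are smooth, i.e.\ nondegenerate modules by the equivalence recalled above, a standard argument with idempotents $e_K$ of small compact open subgroups shows
\[
\pi^{\vee}\otimes_{\mathcal{H}(G)}\Omega\cong(\pi^{\vee}\otimes\Omega)_{G}.
\]
Here the right-hand side is the quotient of $\pi^{\vee}\otimes\Omega$ by the span of the vectors $g v\otimes g w-v\otimes w$. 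This isomorphism is $H$-equivariant, because the $H$-action on $\Omega$ commutes with the $G$-action, and hence with $\mathcal{H}(G)$.

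\textbf{Step 2: coinvariants versus the isotypic quotient.} Write $\Omega(\pi)\cong\pi\otimes\Theta(\pi)$ as in Section~\ref{slocthetasim}. I claim that the natural map
\[
(\pi^{\vee}\otimes\Omega)_G\to(\pi^{\vee}\otimes\pi\otimes\Theta(\pi))_G\cong(\pi^{\vee}\otimes\pi)_G\otimes\Theta(\pi)
\]
is an isomorphism. Since $\pi$ is irreducible and admissible, $(\pi^{\vee}\otimes\pi)_G\cong\mathbb{C}$ via the canonical pairing. This uses $\Hom_G(\pi^\vee\otimes\pi,\mathbb{C})=\Hom_G(\pi,\pi^{\vee\vee})=\Hom_G(\pi,\pi)=\mathbb{C}$, by admissibility and Schur's lemma.

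Surjectivity of the map is clear. For injectivity, I would use that the kernel $\Omega[\pi]:=\bigcap_T\Ker T$ satisfies $\Hom_G(\Omega[\pi],\pi)$ restricted from $\Omega$ exhausting nothing new, so that $(\pi^{\vee}\otimes\Omega[\pi])_G$ maps to zero. Concretely, I would dualize. A linear functional on $(\pi^{\vee}\otimes\Omega)_G$ is the same as an element of
\[
\Hom_G(\Omega,(\pi^{\vee})^{*})=\Hom_G(\Omega,\pi^{\vee\vee}_{\mathrm{alg}}).
\]
Its image lands in smooth vectors, namely $\pi^{\vee\vee}=\pi$, because $\Omega$ is smooth. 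So every such functional factors through some $T\in\Hom_G(\Omega,\pi)$, and therefore kills $\pi^{\vee}\otimes\Omega[\pi]$. Since functionals separate points, the kernel vanishes. This gives $\pi^{\vee}\otimes_{\mathcal{H}(G)}\Omega\cong\Theta(\pi)$ as $H$-modules, with the $H$-action on the right side coming only through $\Omega$.

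\textbf{Expected obstacle.} The main obstacle is making Step 1 and the duality step rigorous for $\Omega$, which is not admissible; it is induced from $R_0$, equivalently $\Ind_{G(F)}^{G}(\omega|_{G(F)})$ by Proposition~\ref{ptwoOmega}. I would first try to note that only smoothness of $\Omega$ is used: the identification of $\mathcal{H}(G)$-balanced maps with $G$-invariant bilinear forms needs only nondegeneracy, and the argument with functionals only needs that $\pi^\vee$ is admissible.

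\textbf{Archimedean case.} Here I replace $\mathcal{H}(G)$ by $\mathcal{H}(\mathfrak{g},K)$ and $G$-coinvariants by $(\mathfrak{g},K)$-coinvariants. I then use the cited equality $\Hom_{(\mathfrak{g},K)}=\Hom_{\mathcal{H}(\mathfrak{g},K)}$ together with $K$-finiteness and admissibility of the Harish-Chandra module of $\pi$, and mirror \cite[Theorems~3--4]{LokePrze2024}.
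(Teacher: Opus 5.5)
Your proposal is correct, and its skeleton matches the paper's. Both apply the right exact functor $\pi^{\vee}\otimes_{\mathcal{H}(G)}-$ (your $G$-coinvariants) to $0\to\Omega[\pi]\to\Omega\to\pi\otimes\Theta(\pi)\to0$ and use $\pi^{\vee}\otimes_{\mathcal{H}(G)}\pi\cong\mathbb{C}$.

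The real difference is how the kernel term is killed. The paper asserts that $\pi^{\vee}\otimes_{\mathcal{H}}\Omega[\pi]$ itself is zero. It argues via a claimed inclusion into $\Hom_{\mathcal{H}}(\pi,\Omega[\pi])$, together with the claim that a copy of $\pi$ inside $\Omega[\pi]$ contradicts the definition of $\Omega[\pi]$. You show only that its \emph{image} in $(\pi^{\vee}\otimes\Omega)_G$ vanishes. Every linear functional on $(\pi^{\vee}\otimes\Omega)_G$ comes from some $T\in\Hom_G(\Omega,\pi)$ (by smoothness of $\Omega$ and $\pi^{\vee\vee}\cong\pi$), and each such $T$ kills $\Omega[\pi]$ by definition.

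Your weaker statement is exactly what the exact sequence needs, and it is the one that holds in general. The relevant duality $\bigl((\pi^{\vee}\otimes M)_G\bigr)^{*}\cong\Hom_G(M,\pi)$ sees quotients, not subrepresentations. The paper's argument uses nothing specific to $\Omega$, and it fails for a non-split self-extension $0\to\pi\to M\to\pi\to0$. For example, take $G=D^{\times}$ and $M=\pi\otimes\rho$ with $\rho(g)=\left(\begin{smallmatrix}1&\mathrm{val}(\nrd g)\\0&1\end{smallmatrix}\right)$. Then $M[\pi]\cong\pi$ and $\pi^{\vee}\otimes_{\mathcal{H}}M[\pi]\cong\mathbb{C}\neq0$, although its image in $\pi^{\vee}\otimes_{\mathcal{H}}M$ is zero.

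So your route gives a complete argument with explicit hypotheses: smoothness of $\Omega$, admissibility of $\pi$, and Schur's lemma. This also confirms that the non-admissibility of $\Omega$ you flag is harmless. Drop the heuristic sentence about $\Hom_G(\Omega[\pi],\pi)$ and let the dualization carry the step.

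In the archimedean case you need not invoke Loke--Przebinda. The same duality argument runs verbatim for $(\mathfrak{g},K)$-modules once $\Omega$ is replaced by its locally $K$-finite Harish-Chandra module. Replace smooth vectors of $(\pi^{\vee})^{*}$ by $K$-finite ones, and Schur's lemma by Dixmier's lemma.
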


\begin{proof}
We write $\mathcal{H}$ for $\mathcal{H}(G)$ (resp.\ $\mathcal{H}(\mathfrak{g},K)$).
Let $\mathcal{T}$ be the underlying space of $\Omega$.
Recall that
\[
\mathcal{T}(\pi)=\bigcap_{T\in \Hom_{G}(\mathcal{T},V_{\pi})}\Ker(T).
\]
Consider the short exact sequence
\[
0\to \mathcal{T}(\pi)\to \mathcal{T} \to V_{\pi}\otimes_{\mathbb{C}} V_{\Theta(\pi)}\to 0.
\]
Tensoring $V_{\pi}^{\vee}$ over $\mathcal{H}$ yields the exact sequence
\[
V_{\pi}^{\vee}\otimes_{\mathcal{H}}\mathcal{T}(\pi)\to V_{\pi}^{\vee}\otimes_{\mathcal{H}}\mathcal{T} \to V_{\pi}^{\vee}\otimes_{\mathcal{H}}V_{\pi}\otimes_{\mathbb{C}} V_{\Theta(\pi)}\to 0.
\]
Since $V_{\pi}^{\vee}\otimes_{\mathcal{H}}V_{\pi}\cong \Hom_{G}(V_{\pi},V_{\pi}^{\vee})\cong\mathbb{C}$, we obtain
\[
V_{\pi}^{\vee}\otimes_{\mathcal{H}}\mathcal{T}(\pi)\to V_{\pi}^{\vee}\otimes_{\mathcal{H}}\mathcal{T} \to  V_{\Theta(\pi)}\to 0.
\]
If $V_{\pi}^{\vee}\otimes_{\mathcal{H}}\mathcal{T}(\pi)\neq 0$, then
$V_{\pi}\subset \mathcal{T}(\pi)$ since
$V_{\pi}^{\vee}\otimes_{\mathcal{H}}\mathcal{T}(\pi)\subset \Hom_{\mathcal{H}}(V_{\pi},\mathcal{T}(\pi))$,
which contradicts the definition of $\mathcal{T}(\pi)$.
Hence $\Theta(\pi)\cong\pi^{\vee}\otimes_{\mathcal{H}}\Omega$.
\end{proof}

For a non-archimedean place $v$, let $K_v=G(\mathcal{O}_v)$ denote the maximal compact subgroup of $G(F_v)$.
Let $\chi_v$ be the characteristic function of $K_v$ (up to a suitable normalization of Haar measure).
Thus, for an $F$-group $G$, we define the Hecke algebra for finite adeles as the restricted tensor product
\begin{equation*}
    \mathcal{H}(G(\mathbb{A}_{\fin})):=\bigotimes_{v\in V_{\fin}}{}'(\mathcal{H}(G(F_v)),\chi_v),
\end{equation*}
where the restricted tensor product is taken over the set $V_{\fin}$ of finite places of $F$.
Following \cite[Section~7]{Kn97llds}, we define the global Hecke algebra of the adelic group $G(\mathbb{A})$ by
\begin{equation}\label{egHecke}
\mathcal{H}(G):=\mathcal{H}(G(\mathbb{A}))=\mathcal{H}(G(\mathbb{A}_{\fin}))\otimes\left(\otimes_{v\in V_{\infty}}\mathcal{H}(\mathfrak{g_v},K_v)\right),
\end{equation}
where $V_{\infty}$ denotes the set of infinite places of $F$. 
It is known that the smooth vectors of a $G(\mathbb{A})$-module form a representation of $\mathcal{H}(G)$. 

Next, we construct the underlying space of the adelic induced Weil representation.
For a non-archimedean place $v$, the induced Weil representation can be realized on $\mathcal{S}(W_{1,F_v}\times F_v^{\times})$ as shown for the symplectic-orthogonal cases in \cite[Section 3]{RobertBrooks1996}.
Fix an $F_v$-basis $\{e_1,\cdots,e_l\}$ of $W_{1,F_v}$.
Let $\eta_v$ be the characteristic function of $\langle e_1,\cdots,e_l\rangle_{\mathcal{O}_v}\times \mathcal{O}_v^{\times}$.
We then define the restricted tensor product with respect to these vectors by
\begin{equation}\label{eglobalOmega}
    \Omega_{\mathbb{A}}=\bigotimes_{v\in V_{\fin}}{}'(\Omega_v,\eta_v),
\end{equation}
where $\Omega_v$ is the induced Weil representation at $v$.
One can show that $\Omega_{\mathbb{A}}\cong \mathcal{S}(W_{1,\mathbb{A}}\times \mathbb{A}^{\times})$.
Such a construction is explicit for the ordinary global Weil representation (see \cite[\S 6]{GMS2024} and \cite[\S 8]{PrasadD1993}).

\begin{corollary}\label{cgtenHsim}
Let $\pi=\otimes_v\pi_v$ be an automorphic representation of $G$.
Assume that the local big theta lift is irreducible at every place, i.e.\ $\Theta(\pi_v)=\theta(\pi_v)$ for all $v$.
Under the assumption of Theorem~\ref{tlgcomp}, we have
\[
V(\pi)\cong \pi^{\vee}\otimes_{\mathcal{H}(G)}\Omega_{\mathbb{A}}
\]
as a representation of $H(\mathbb{A})$.
\end{corollary}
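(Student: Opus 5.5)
The plan is to reduce the global statement to the local Lemma~\ref{ltenHsim} place by place, and then glue the local results using the restricted tensor product structures. By Theorem~\ref{tlgcomp}, under the stated hypotheses we already have $V(\pi)\cong\theta(\pi)=\otimes'_v\theta(\pi_v)$. By the irreducibility assumption $\Theta(\pi_v)=\theta(\pi_v)$, and by Lemma~\ref{ltenHsim} (applied at finite places with $\mathcal{H}(G(F_v))$ and at archimedean places with $\mathcal{H}(\mathfrak{g}_v,K_v)$) we get $\theta(\pi_v)\cong\pi_v^{\vee}\otimes_{\mathcal{H}_v}\Omega_v$. It therefore suffices to prove the factorization
\[
\pi^{\vee}\otimes_{\mathcal{H}(G)}\Omega_{\mathbb{A}}\;\cong\;{\bigotimes_v}{}'\bigl(\pi_v^{\vee}\otimes_{\mathcal{H}_v}\Omega_v\bigr),
\]
where the right side is a restricted tensor product with respect to suitable distinguished vectors, and to check that it is $H(\mathbb{A})$-equivariant.

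For the factorization, I will write $\pi^{\vee}=\otimes'_v(\pi_v^{\vee},\xi_v^{\circ})$ with $\xi_v^{\circ}$ spherical, $\mathcal{H}(G)=\otimes'_v(\mathcal{H}_v,\chi_v)$ and $\Omega_{\mathbb{A}}=\otimes'_v(\Omega_v,\eta_v)$, all as direct limits over finite sets $S$ of places containing $V_\infty$ and the ramified places. For each $S$, tensor products over a tensor product of algebras factor:
\[
\Bigl(\bigotimes_{v\in S}\pi_v^{\vee}\Bigr)\otimes_{\otimes_{v\in S}\mathcal{H}_v}\Bigl(\bigotimes_{v\in S}\Omega_v\Bigr)\cong\bigotimes_{v\in S}\bigl(\pi_v^{\vee}\otimes_{\mathcal{H}_v}\Omega_v\bigr).
\]
This is a formal fact about tensor products of modules over $\mathbb{C}$-algebras. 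Since tensor products commute with direct limits, I then pass to the limit over $S$. The transition maps on the right are $x\mapsto x\otimes(\xi_v^{\circ}\otimes\eta_v)$, so the distinguished vector at $v\notin S$ is the image of $\xi^{\circ}_v\otimes\eta_v$ in $\pi_v^{\vee}\otimes_{\mathcal{H}_v}\Omega_v$. Equivariance for $H(\mathbb{A})$ is clear, since $H$ acts only on the $\Omega$ factors and commutes with the $\mathcal{H}(G)$-action.

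The main obstacle is verifying that the limit is genuinely the restricted tensor product $\otimes'_v\theta(\pi_v)$. Concretely, for almost all $v$ the vector $\xi_v^{\circ}\otimes\eta_v$ must have nonzero image, which then must be the $H(\mathcal{O}_v)$-spherical vector of the unramified representation $\theta(\pi_v)$.

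For nonvanishing, I would use the fact that $\chi_v$ acts as the projector onto $K_v$-invariants, so $\xi^{\circ}_v\otimes\eta_v=\xi^\circ_v\otimes \chi_v\eta_v$. Under $\pi_v^{\vee}\otimes_{\mathcal{H}_v}\Omega_v\cong\Omega_v/\Omega_v(\pi_v)$ (from the proof of Lemma~\ref{ltenHsim}), this vector corresponds to the image of $\eta_v$ in the spherical component of the maximal $\pi_v$-isotypic quotient. Its nonvanishing is the standard unramified computation: $\eta_v$ generates the spherical part and has a nonzero unramified theta lift at almost all places, as in \cite[\S 8]{PrasadD1993} for the ordinary Weil representation, transported to the induced model via Proposition~\ref{ptwoOmega}. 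Its $H(\mathcal{O}_v)$-invariance follows from the invariance of $\eta_v$. Since $\theta(\pi_v)$ is irreducible and unramified with a one-dimensional spherical space, this identifies the distinguished vectors, and the two restricted tensor products agree. Combined with $V(\pi)\cong\theta(\pi)$, this yields the corollary.
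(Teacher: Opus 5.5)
Your proposal is correct and takes the same route as the paper, whose proof simply combines Theorem~\ref{tlgcomp} with Lemma~\ref{ltenHsim} at every place. You go further by making explicit the gluing step the paper leaves implicit: writing $\pi^{\vee}\otimes_{\mathcal{H}(G)}\Omega_{\mathbb{A}}$ as a direct limit of finite tensor products, and checking that the images of $\xi_v^{\circ}\otimes\eta_v$ are nonzero and spherical. One small slip: the proof of Lemma~\ref{ltenHsim} identifies $\pi_v^{\vee}\otimes_{\mathcal{H}_v}\Omega_v$ with $\pi_v^{\vee}\otimes_{\mathcal{H}_v}(\pi_v\otimes\Theta(\pi_v))\cong\Theta(\pi_v)$, not with the quotient $\pi_v\otimes\Theta(\pi_v)$ itself; this does not affect your reduction of nonvanishing to the nonvanishing of the image of $\eta_v$ in that quotient.
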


\begin{proof}
This follows from Lemma~\ref{ltenHsim} and Theorem~\ref{tlgcomp}.
\end{proof}

\section{The tensor product for the global Jacquet--Langlands correspondence}\label{stenJL}

In this section, we fix a quaternion division algebra $D$ over a number field $F$.
Let $\nrd$ be the reduced norm defined by $\nrd(x)=x\cdot \overline{x}$ and $\tr_{D/F}$ be the trace given by $\tr(x)=x+\overline{x}$.

Fix a non-archimedean place $v$ of $F$ and consider the quaternionic case where $V=V_1$ and $W=W_2$ as described in Example~\ref{eGTquasim}.
We have $V_1\cong D$ and $W_2=D f_1\oplus D f_2$ with $(f_1,f_2)=1$ and $(f_i,f_i)=0$.
Thus, the Weil representation of $\U(V_1)\times \U(W_2)$ can be realized on the space $\mathcal{S}(D)$ of Schwartz--Bruhat functions on $D$ as follows.
\begin{enumerate}
    \item \underline{$\U(V_1)$}.
    The isometry group and similitude group are
    \begin{center}
        $\U(V_1)\cong D^1$ and $\GU(V_1)\cong D^{\times}$.
    \end{center}
    For $g\in \U(V_1)$ and $\phi\in \mathcal{S}(D)$, we have
    \[
        (\omega_{\psi}(g)\phi)(x)=\phi(g^{-1}x).
    \]

    \item \underline{$\U(W_2)$}.
    Recall that $\U(W_2)$ consists of elements in $\End_{D}(W_2)$ that preserve the inner product $(~,~)$.
    With respect to the basis $f_1,f_2$, we may represent $h\in \U(W_2)$ by
    $\begin{pmatrix}
h_{1,1} & h_{1,2} \\
h_{2,1} & h_{2,2}
\end{pmatrix}$ with $h_{i,j}\in D$.
We can further show that the $\U(W_2)$-action on $W_2$ can be represented by elements of the form
    $(d,\begin{pmatrix}
a_{1,1} & a_{1,2} \\
a_{2,1} & a_{2,2}
\end{pmatrix})\in D^{\times}\times \GL(2,F_{v})$, acting by
\begin{center}
    $(d,\begin{pmatrix}
a_{1,1} & a_{1,2} \\
a_{2,1} & a_{2,2}
\end{pmatrix})f_1 = d(a_{1,1}f_{1}+a_{1,2}f_{2})$,\\
    $(d,\begin{pmatrix}
a_{1,1} & a_{1,2} \\
a_{2,1} & a_{2,2}
\end{pmatrix})f_2 = d(a_{2,1}f_{1}+a_{2,2}f_{2})$.
\end{center}
We obtain
\[
\U(W_2)\cong \{(d,a)\in D^{\times}\times \GL(2,F_{v})\mid \nrd(d)\det(a)=1\}/\{(z,z^{-1})\mid z\in F_{v}^{\times}\},
\]
and
\[
\GU(W_2)=\GO_{1,1}^{*}\cong (D^{\times}\times \GL(2,F_{v}))/\{(t,t^{-1})\mid t\in F_{v}^{\times}\}.
\]
Since elements of the form
$\begin{pmatrix}
a & 0 \\
0 & \overline{a}^{-1}
\end{pmatrix}$,
$\begin{pmatrix}
1 & n \\
0 & 1
\end{pmatrix}$, and
$\begin{pmatrix}
0 & -1 \\
1 & 0
\end{pmatrix}$
($a\in D^{\times}$ and $n\in D$) generate $\U(W_2)$ by the Bruhat decomposition, it suffices to define their actions.
\begin{enumerate}
\item $(\omega_{\psi}(\begin{pmatrix}
a & 0 \\
0 & \overline{a}^{-1}
\end{pmatrix})\phi)(x)=\nrd(aa^*)^{1/2}\phi(xa)$;
\item $(\omega_{\psi}(\begin{pmatrix}
1 & n \\
0 & 1
\end{pmatrix})\phi)(x)=\psi(\frac{\tr_{D/F}(\nrd(x)n)}{2})\phi(x)$;
\item $(\omega_{\psi}(\begin{pmatrix}
0 & -1 \\
1 & 0
\end{pmatrix})\phi)(x)=\lambda_{\psi}\int_{D}\phi(y)\psi(\tr_{D/F}(x\overline{y}))\dd y$,
where $\lambda_{\psi}$ is a constant depending on $\psi$ and the Haar measure on $D$.
\end{enumerate}
\end{enumerate}

Now we give an explicit description of the underlying space of the induced Weil representation $\Omega$ for the associated similitude dual pair (see \eqref{eindWeil}) using a Schwartz--Bruhat space.
We fix the following notation for the similitude dual pair:
\begin{equation*}
\begin{aligned}
G=\GU(V_1)=\GSp_{1,0}&\cong D^{\times},\\
H=\GU(W_2)=\GO_{1,1}^{*}&\cong (D^{\times}\times \GL(2,F))/\{(t,t^{-1})\mid t\in F^{\times}\}.
\end{aligned}
\end{equation*}
By Proposition~\ref{ptwoOmega}, $\Omega$ acts on the space
\[
    V_{\Omega}=\{f\colon G\to \mathcal{S}(D)\mid f(g_1 g)=\omega(g_1)f(g)\ \text{for all } g_1\in G_1,\ g\in G\}.
\]
Define the linear map
\[
    \Phi\colon V_{\Omega}\to \mathcal{S}(D\times F^{\times})
\]
by
\[
    \phi_{f}(x,y)=f(d(y)^{-1}x),
\]
for $f\in V_{\Omega}$ and $(x,y)\in D\times F^{\times}$.

For $t\in F_{v}^{\times}$, we let $d(t)$ be the similitude factor belongs to $G$ or $H$ such that $\lambda(d(t)=t$. 
The $G\times H$-action can be described as follows.
\begin{enumerate}
\item For $g\in G_1=D^{1}$, we let
\begin{center}
    $\Omega(g,1)\phi(x,y)=|\lambda_{G}(g)|^{1/2}\phi(g^{-1}x,\lambda_{G}(g)y)$,
\end{center}
where $g^{-1}x$ denotes the left translation by $g^{-1}$ on $x\in D$. 
\item For $h\in \U(W_2)$, we let
\begin{center}
    $\Omega(1,h)\phi(x,y)=(\omega_{\psi}(1,h^t)\phi_{t})(x)$,
\end{center}
where $h^t$ denotes the conjugate of $h$ by a similitude factor $d(t)$ and $\phi_{t}\in \mathcal{S}(D)$ is given by $\phi_{t}(x)=\phi(x,t)$. 
\item For a similitude factor $d(s)\in H$ with $s\in F_{v}^{\times}$, we let
\begin{center}
    $(\Omega(1,d(s))\phi)(x,t)=\phi(x,s^{-1}t)$. 
\end{center}
\item For a general $h\in H$, we may write it as $h=d(s)h_1$ with $h_1\in \U(W_2)$ and $d(s)$ denotes the similitude factor such that $s=\lambda_{H}(h)$. 
We let
\begin{center}
 $\Omega(1,h)=\Omega(1,d(s))\Omega(1,h_1)$.
\end{center}
\end{enumerate}
Therefore, we obtain a concrete model for $\Omega$ as the symplectic-orthogonal case in \cite[Section 3]{RobertBrooks1996}. 

\begin{lemma}\label{lOmegaS}
The map $\Phi\colon V_{\Omega}\to \mathcal{S}(D\times F^{\times})$ is a $G\times H$-equivariant isomorphism.
\end{lemma}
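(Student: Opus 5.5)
The plan is to follow Roberts' construction for the symplectic--orthogonal case: first show that $\Phi$ is a linear bijection, then check equivariance on generators. For bijectivity I will use the structure of $G/G_1$. Here $G=D^{\times}$, $G_1=D^{1}$, and the reduced norm induces $G/G_1\cong \nrd(D^{\times})=F^{\times}$; the equality holds because $\nrd$ is surjective for a quaternion division algebra over a non-archimedean field. So each coset of $G_1$ in $G$ has a representative $d(y)$ with $y\in F^{\times}$, and I will fix the section $y\mapsto d(y)$ so that $\lambda_G(d(y))=y$.

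A function $f\in V_{\Omega}$ is then determined by the values $f(d(y))\in\mathcal{S}(D)$ for $y\in F^{\times}$. I will read the definition of $\phi_f(x,y)$ as $\phi_f(x,y)=\bigl(f(d(y)^{-1})\bigr)(x)$, up to the chosen normalization of the section. With this reading, $\Phi$ is injective because $f$ is recovered from its restriction to the section.

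For surjectivity, given $\phi\in\mathcal{S}(D\times F^{\times})$ I will define
\[
f(g_1 d(y)^{-1})=\omega(g_1)\phi(\cdot,y).
\]
This $f$ lies in $V_{\Omega}$ by construction, and its image under $\Phi$ is $\phi$. Smoothness and compact support modulo $G_1$ on the $V_{\Omega}$ side correspond to local constancy and compact support in the $y$-variable. Since $\mathcal{S}(D\times F^{\times})=\mathcal{S}(D)\otimes C_c^{\infty}(F^{\times})$, the two spaces match. For this to work, $\Ind$ in Proposition~\ref{ptwoOmega} must be read as compact induction.

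For equivariance, it suffices to check the four types of elements used to define the action on $\mathcal{S}(D\times F^{\times})$. These are: $G$ acting by right translation on $V_{\Omega}$; $\U(W_2)$, which commutes with $G_1$ and acts pointwise through $\omega$; the similitude elements $d(s)\in H$; and general $h=d(s)h_1$. Equivariance for general $h$ follows from the first three once multiplicativity is established.

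The key computation is the following. Right translation by $g\in G$ sends $f$ to $f(\cdot\,g)$. Writing $d(y)^{-1}g=g'\,d(\lambda_G(g)^{-1}y)^{-1}$ with $g'\in G_1$ (up to the modulus normalization), one obtains the formula $|\lambda_G(g)|^{1/2}\phi(g^{-1}x,\lambda_G(g)y)$. The factor $|\lambda_G(g)|^{1/2}$ arises from the unitary normalization of the induction together with the Weil representation's twist $\nrd(aa^{*})^{1/2}$ coming from the similitude action on $D$.

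For $h\in\U(W_2)$, one uses that $d(y)$ normalizes $R_0$. Evaluating at $d(y)^{-1}$ then converts the action $\overline{\omega}(1,h)$ into $\omega_{\psi}(h^{t})$, where $h^{t}$ is the conjugate of $h$ by $d(t)$; this explains the appearance of $h^{t}$. Finally, $d(s)\in H$ acts through the $R_0$-structure as the pair $(d(s)^{-1},d(s))$ modulo $R_0$, which shifts the $y$-variable as $y\mapsto s^{-1}y$.

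I expect the main obstacle to be making these cocycle and normalization computations consistent. Specifically, the choice of section $d(\cdot)$ must be compatible on $G$ and on $H$ simultaneously. If it is not, the formulas in items (1)--(4) hold only up to replacing $\phi$ by a fixed twist, and one must absorb that twist into $\Phi$. I would first try choosing $d(t)$ in $G$ and in $H$ so that $(d(t),d(t)^{-1})$ lies in $\Delta(T)$; that choice makes the conjugation identities exact. Once the section is fixed this way, the remaining verifications reduce to the generator formulas (a)--(c) for $\omega_{\psi}$ and are routine.
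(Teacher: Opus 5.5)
The paper states this lemma without proof and only points to Roberts' model, so your argument has to stand on its own. Its equivariance step does not. The bijectivity part is essentially fine, although $y\mapsto f(d(y)^{-1})$ is locally constant only if the section $d$ is chosen continuous. But with your reading $\phi_f(x,y)=(f(d(y)^{-1}))(x)$, the formulas (1)--(3) are false. Write $d(y)^{-1}g=g'\,d(y')^{-1}$ with $y'=\lambda_G(g)^{-1}y$. Then $g'=d(y)^{-1}g\,d(y')\in G_1$ and
\[
\phi_{\rho(g)f}(x,y)=\phi_f\bigl(g'^{-1}x,\ \lambda_G(g)^{-1}y\bigr),
\]
and you silently replaced $g'$ by $g$. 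Already for $g\in G_1$ this gives $\phi_f(d(y)^{-1}g^{-1}d(y)x,y)$. No choice of section fixes this: it would force $d(y)$ to centralize $D^{1}$, i.e.\ $d(y)\in F^{\times}$, whose reduced norm is a square, while $\nrd(d(y))=y$ is arbitrary. Your map also produces no scalar under right translation. A factor like $|\lambda_G(g)|^{1/2}$ cannot come from normalizing the induction, since $G_1$ and $G$ are unimodular and there is no modulus character.

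For $h\in\U(W_2)$, the elements $(1,h)$ and $(d(y),1)$ commute in $G\times H$. So your map gives $\phi_{hf}(x,y)=(\omega_\psi(h)\phi_{f,y})(x)$ with no conjugation. The $h^t$ of item (2) is conjugation by a similitude element of $H$, and evaluating at points of $G$ cannot produce it. Your fallback, choosing $(d(t),d(t)^{-1})\in\Delta(T)$, is impossible for non-square $t$: $\Delta(T)$ is central, and central elements of $D^{\times}$ have square reduced norm.

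What is missing is a $y$-dependent twist of the fibre, not a clever section. Put $\Phi(f)(x,y)=|y|^{a}\,(f(g))(gx)$ for any $g\in G$ with $\lambda_G(g)=y^{-1}$. This is $f(d(y)^{-1})$ evaluated at $d(y)^{-1}x$, which is the sensible reading of the paper's $f(d(y)^{-1}x)$. Since $f(g_1g)=\omega(g_1)f(g)$ and $G_1$ acts on $\mathcal{S}(D)$ by left translation, this does not depend on $g$. So no section is needed, and right translation by $g$ becomes $|\lambda_G(g)|^{a}\phi(g^{-1}x,\lambda_G(g)^{-1}y)$, up to the $y\leftrightarrow y^{-1}$ convention.

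The key input for items (2) and (3) concerns a specific element $d_H(t)$ of $\GL(2,F)\subset H$. Choose $d_H(t)$ to be the identity on the Lagrangian carrying the Schr\"odinger model and to multiply the complementary Lagrangian by $t$. Then $r=(g,d_H(\lambda_G(g)^{-1}))$ lies in $R_0$ and stabilizes both $Df_1$ and $Df_2$, so $\overline{\omega}(r)$ is a nonzero scalar multiple of $\phi\mapsto\phi(g^{-1}x)$. Because $\overline{\omega}$ is a representation of $R_0$, conjugating $\omega_\psi(h)$ by this operator gives $\omega_\psi(d_H^{-1}hd_H)$, and that is exactly where $h^t$ comes from. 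The same element shows that $d_H(s)$ acts by $y\mapsto s^{-1}y$ up to a scalar. The exponent $a$ must then be chosen to absorb these scalars, and that choice is what fixes the power of $|\lambda_G(g)|$ in item (1).
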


We next record the irreducibility of the similitude big theta lift for irreducible representations of the local groups $G(F_v)$.

\begin{proposition}\label{psimstb}
With the similitude dual pair $(G,H)$ above, for each place $v$ and each $\pi_v\in \irr(G(F_v))$, we have
\[
\Theta(\pi_v)\cong\theta(\pi_v),
\]
and therefore $\Theta(\pi_v)$ is irreducible.
\end{proposition}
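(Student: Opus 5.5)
The plan is to treat separately the places where $D$ ramifies and the places where it splits. In each case I would use Theorem~\ref{tsimHowe} to know that $\Theta(\pi_v)$ has a \emph{unique} irreducible quotient $\theta(\pi_v)$. It then suffices to show that $\Theta(\pi_v)$ is semisimple, or to compute it directly. Throughout I work with the model of $\Omega_v$ given by Proposition~\ref{ptwoOmega} and Lemma~\ref{lOmegaS}, namely $\Omega_v\cong \Ind_{G_1}^{G}(\omega|_{G_1})$ realized on $\mathcal{S}(D_v\times F_v^{\times})$. I also use the description $\Theta(\pi_v)\cong \Hom_{G_1}(\omega,\pi_v)$ recalled in Section~\ref{slocthetasim}.

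\emph{Ramified places ($D_v$ division, including real places where $D_v=\mathbb{H}$).} Here $G_1=D_v^1$ is compact. The first step is to decompose the Weil representation of the isometry pair as $\omega|_{D_v^1\times \U(W_2)}\cong \bigoplus_{\sigma}\sigma\otimes\Theta_0(\sigma)$, where $\sigma$ runs over irreducible representations of $D_v^1$. Compactness makes each $\sigma$-isotypic quotient a direct summand, and $\Theta_0(\sigma)$ is irreducible by the classical theory of compact dual pairs (Howe duality plus semisimplicity). Next, $\pi_v|_{D_v^1}$ is a finite sum of $\sigma$'s forming one $D_v^{\times}$-orbit. Hence
\[
\Theta(\pi_v)\cong\Hom_{D_v^1}(\omega,\pi_v)
\]
is a finite direct sum of the $\Theta_0(\sigma)$, with $H^{+}$ permuting them transitively. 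A Clifford-theory argument then shows that, as an $H$-representation, $\Theta(\pi_v)$ is induced from an irreducible representation of a finite-index stabilizer. The induced representation is semisimple, and by uniqueness of the irreducible quotient it is irreducible. Alternatively, I would cross-check this against the explicit computation $\theta(\pi_v)=\pi_v^{\vee}\boxtimes \JL(\pi_v)$ of Gan--Tantono.

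\emph{Split places ($G=\GL(2,F_v)$, including complex places).} Here $\mathcal{S}(D_v)=\mathcal{S}(M_2(F_v))$, and $H$ acts through $(\GL_2\times\GL_2)/F^{\times}$ essentially by left and right translation. I would filter $\mathcal{S}(M_2\times F^{\times})$ by the open orbit $\GL_2\times F^{\times}$ and the closed set of singular matrices, giving
\[
0\to \mathcal{S}(\GL_2\times F^{\times})\to \Omega_v\to \Omega_v^{\mathrm{sing}}\to 0 .
\]
For the open piece, the maximal $\pi_v$-isotypic quotient of $C_c^{\infty}(\GL_2)$ is $\pi_v\otimes\pi_v^{\vee}$, which is irreducible. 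The remaining task is to show that the singular part contributes no extension, i.e. that $\Hom_G(\Omega_v^{\mathrm{sing}},\pi_v)$ and the relevant $\mathrm{Ext}^1$ do not enlarge $\Theta(\pi_v)$. For infinite-dimensional $\pi_v$ I would use Jacquet-module considerations on the rank-one and zero orbits. At archimedean places I would run the same argument for Harish-Chandra modules, or quote Roberts/Gan--Takeda for $\GL_2\times\GO(2,2)$.

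I expect the main obstacle to be this split-place boundary analysis for non-generic $\pi_v$, namely one-dimensional characters $\chi\circ\det$. In that case the singular orbits do produce $\chi$-equivariant functionals. I would first try to check directly that these functionals factor through the open-orbit quotient. Failing that, I would invoke the known computation of the big theta lift for the pair $(\GL_2,\GO(2,2))$ (Gan--Takeda), which asserts $\Theta(\pi_v)=\pi_v^{\vee}\boxtimes\pi_v$ for every $\pi_v$.
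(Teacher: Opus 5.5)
\paragraph{Ramified places.} Where $D_v$ is a division algebra your argument works, and it is a genuine alternative to the paper's. The paper treats all places uniformly. It cites stable-range results (Loke--Ma, Chen--Zou) to get $\Theta_\psi(\sigma_i)=\theta_\psi(\sigma_i)$ for every constituent $\sigma_i$ of $\pi_v|_{G_1}$, and then passes to the similitude group by Gan--Takeda, Lemma~2.2(iii). You instead obtain the isometry statement from compactness of $D_v^1$, and your Clifford step is essentially that lemma. What makes the Clifford step work is that $H_1Z_H$ has finite index $|F_v^\times/F_v^{\times2}|$ in $H$. Hence semisimplicity of the restriction forces semisimplicity of $\Theta(\pi_v)$, and uniqueness of the irreducible quotient gives irreducibility. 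Two small corrections:
\begin{itemize}
\item $\Hom_{D_v^1}(\omega,\pi_v)$ is the dual of $\Theta(\pi_v)$, not $\Theta(\pi_v)$ itself.
\item Irreducibility of $\Theta_0(\sigma)$ for the compact pair should be cited as a theorem. ``Howe duality plus semisimplicity'' is circular, because semisimplicity of $\Theta_0(\sigma)$ is exactly what is at stake.
\end{itemize}

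\paragraph{Split places: the model.} Here there is a genuine gap. First, you have misread the model.
\begin{itemize}
\item $G=\GL_2$ acts by left translation on $M_2$, and on the $F_v^\times$-variable through $\det$.
\item $H=(\GL_2\times\GL_2)/F_v^\times$ acts by right translation in one factor. In the other factor it acts by the Weil representation, which includes a Fourier transform.
\end{itemize}
So your open/singular filtration is not $H$-stable. Moreover $y\det x$ is $G$-invariant, so the open piece is $C_c^\infty(\GL_2)\otimes\mathcal{S}(F_v^\times)$. Its $\pi_v$-multiplicity space is (a twist of) $\pi_v^\vee\otimes\mathcal{S}(F_v^\times)$, which is not irreducible. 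For principal series $\pi_v$, the rank-one stratum carries nonzero $\pi_v$-equivariant functionals (governed by the Jacquet module), and these genuinely belong to $\Theta(\pi_v)^*$. So ``the singular part contributes nothing'' is the wrong target.

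\paragraph{Split places: one-dimensional $\pi_v$.} Second, and fatally, the case you single out cannot be handled, because the assertion is false there. Take $\pi_v=\chi\circ\det$ at a non-archimedean split place.
\begin{itemize}
\item For each line $F_vw\subset F_v^2$, the plane $L_w=\{vw^{t}: v\in F_v^2\}$ is maximal isotropic for $\det$.
\item For a suitable character $\mu$, the functional $\phi\mapsto\int_{F_v^\times}\int_{F_v^2}\phi(vw^{t},y)\,\mu(y)\,dv\,d^\times y$ is $\chi\circ\det$-equivariant.
\item Distinct lines give linearly independent functionals, since the planes meet only at $0$.
\end{itemize}
Hence $\Theta(\chi\circ\det)^*\cong\Hom_G(\Omega_v,\chi\circ\det)$ is infinite-dimensional.

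On the other hand, consider the invariant measure on the cone $\{\det x=0\}$, i.e.\ the integral of $\delta_{L_w}$ over one ruling. It is invariant under $\mathrm{SL}_2$ acting by left translation and by right translation. It is also invariant under the Weil-representation $\mathrm{SL}_2$, since $\widehat{\delta_L}=\delta_{L^\perp}=\delta_L$. After integrating against a suitable character in $y$, this shows that the unique irreducible quotient $\theta(\chi\circ\det)$ is a character of $H$. Therefore $\Theta(\chi\circ\det)\neq\theta(\chi\circ\det)$.

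Consequences:
\begin{itemize}
\item There is no computation ``$\Theta(\pi_v)=\pi_v^\vee\boxtimes\pi_v$ for every $\pi_v$'' to quote.
\item The functionals above vanish on the open part; they are genuinely new contributions of the singular locus.
\item The same example shows that the stable-range irreducibility invoked in the paper's proof cannot hold for $\sigma=\mathbf{1}$ in the boundary pair $(\mathrm{SL}_2,\Ot(2,2))$.
\end{itemize}
Both the proposition and your proof need to be restricted to infinite-dimensional $\pi_v$ at split places.
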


\begin{proof}
First, consider the dual pair of isometry groups $(G_1,H_1)$.
For an irreducible representation $\pi_v$ of $G(F_v)$, we may write
\[
\pi_v|_{G_1(F_v)}\cong \bigoplus_{i\in I}\sigma_i
\]
as a decomposition into irreducible representations of $G_{1}(F_v)$, where $I$ is finite.
Note that $(G,H)$ is a dual pair in the stable range (for the isometry subgroups), with $G$ the smaller member.
By \cite[Theorem~A]{LokeMa2015} and \cite[Corollary~7.3]{ChenZou2024}, the big theta lift is irreducible:
$\Theta_{\psi}(\sigma_i)=\theta_{\psi}(\sigma_i)$ for $\sigma_i\in \irr(G_{1}(F_v))$, both in the archimedean and the non-archimedean cases.
By \cite[Lemma~2.2(iii)]{GanTak2011}, we conclude that $\Theta(\pi_v)=\theta(\pi_v)$.
\end{proof}

We briefly review the Jacquet--Langlands correspondence, initiated by Jacquet and Langlands in \cite{JL70}.
We follow a recent approach of Badulescu (see \cite{Badu08}).

Let $Z$ be the center of $G$, so $Z\cong \mathbb{G}_m$.
For a unitary character $\chi\colon Z(\mathbb{A})\to \mathbb{C}^{\times}$ that is trivial on $Z(F)$, define $L^2(G(F)\backslash G(\mathbb{A}),\chi)$ to be the space of functions $f\colon G(\mathbb{A})\to \mathbb{C}$ such that
\[
f(\gamma z g)=\chi(z)f(g)\quad \text{for } z\in Z(\mathbb{A}),\ \gamma\in G(F),\ g\in G(\mathbb{A}),
\]
and
\[
\int_{G(F)Z(\mathbb{A})\backslash G(\mathbb{A})}|f(g)|^2\,\dd g<\infty.
\]
Let $R_{\chi}$ denote the quasi-regular representation of $G(\mathbb{A})$ on $L^2(G(F)\backslash G(\mathbb{A}),\chi)$.
Since the quotient $G(F)Z(\mathbb{A})\backslash G(\mathbb{A})$ is compact when $D$ is non-split, $R_{\chi}$ decomposes discretely into irreducible representations of $G(\mathbb{A})$.
Let $\irr_{\mathcal{A}}(G,\chi)$ be the set of equivalence classes of irreducible automorphic representations of $G(\mathbb{A})$, i.e.\ the irreducible subrepresentations of $R_{\chi}$.

For the group $\GL(2)$, the analogous quasi-regular representation decomposes into a discrete and a continuous spectrum.
Let $\irr_{\mathcal{A},\disc}(\GL(2),\chi)$ be the set of equivalence classes of irreducible representations of $\GL(2,\mathbb{A})$ contained in the discrete spectrum with central character $\chi$.
By \cite[Theorem~5.1]{Badu08}, there exist two injective maps:
\begin{itemize}
\item the \emph{local Jacquet--Langlands correspondence} $\JL_{v}\colon \irr(G(F_v))\to \irr(\GL(2,F_v))$ for each place $v$;
\item the \emph{global Jacquet--Langlands correspondence} $\JL\colon \irr_{\mathcal{A}}(G,\chi)\to\irr_{\mathcal{A},\disc}(\GL(2),\chi)$ such that:
if $\JL(\otimes_{v}\pi_v)=\otimes_{v}\pi'_v$, then $\JL_{v}(\pi_v)=\pi'_{v}$ for each place $v$.
\end{itemize}
It is known that $G(F_v)$ and $\GL(2,F_v)$ are isomorphic for all but finitely many places, and $\JL_v$ is the identity at these places.

The global Jacquet--Langlands correspondence is usually formulated for unitary representations.
Thus, for a unitarizable smooth representation $\pi$, we write $\pi$ also for its unitary completion, and we replace the contragredient $\pi^{\vee}$ by the Hilbert space dual $\pi^*$ when working in the unitary category.
For the global induced Weil representation $\Omega_{\mathbb{A}}$ defined in \eqref{eglobalOmega}, it is known that
\[
\Omega_{\mathbb{A}}\cong \mathcal{S}(D(\mathbb{A})\times \mathbb{A}^{\times})
\]
by Lemma~\ref{lOmegaS}.
Thus, for its completion $\overline{\Omega_{\mathbb{A}}}$, we have
\[
\overline{\Omega_{\mathbb{A}}}\cong L^2(D(\mathbb{A})\times \mathbb{A}^{\times}).
\]
Recall that $\mathcal{H}(G)$ denotes the global Hecke algebra defined in \eqref{egHecke}.

\begin{theorem}\label{tJltenHecke}
Let $G=D^{\times}$ and let $\mathcal{H}(G)$ be the global Hecke algebra of $G$ defined above.
Let $\pi$ be an irreducible automorphic representation of $G(\mathbb{A})$ with central character $\chi$.
Then
\begin{equation}
   \pi^{\vee}\otimes_{\mathcal{H}(G)}\mathcal{S}(D(\mathbb{A})\times \mathbb{A}^{\times})\cong\pi\otimes\JL(\pi)
\end{equation}
as smooth representations of $D^{\times}(\mathbb{A})\times \GL(2,\mathbb{A})$.
In particular,
\begin{equation}\label{etmain}
    L^2(G(F)\backslash G(\mathbb{A}),\chi)\otimes_{\mathcal{H}(G)}L^2(D(\mathbb{A})\times \mathbb{A}^{\times})
\cong
\bigoplus_{\pi\in\irr_{\mathcal{A}}(G,\chi^{-1})}\pi\otimes\JL(\pi).
\end{equation}
\end{theorem}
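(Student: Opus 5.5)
The plan is to derive the first isomorphism from Corollary~\ref{cgtenHsim} applied to the similitude dual pair $(G,H)=(D^{\times},(D^{\times}\times\GL(2))/F^{\times})$. The identification of the global lift with $\pi\otimes\JL(\pi)$ is supplied by the theory of the similitude theta correspondence. The second isomorphism then follows by summing over the discrete decomposition of $R_{\chi}$.

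\emph{Local input.} By Lemma~\ref{lOmegaS} and the restricted tensor product \eqref{eglobalOmega}, $\Omega_{\mathbb{A}}\cong\mathcal{S}(D(\mathbb{A})\times\mathbb{A}^{\times})$ as $G(\mathbb{A})\times H(\mathbb{A})$-modules. At each place, Theorem~\ref{tsimHowe} gives Howe duality for $(G(F_v),H(F_v))$. Proposition~\ref{psimstb} gives $\Theta(\pi_v)=\theta(\pi_v)$, so Lemma~\ref{ltenHsim} yields $\pi_v^{\vee}\otimes_{\mathcal{H}(G_v)}\Omega_v\cong\theta(\pi_v)$.

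I would then identify $\theta(\pi_v)$ as a representation of $H(F_v)$. Since $H(F_v)$ is a quotient of $D_v^{\times}\times\GL(2,F_v)$, $\theta(\pi_v)$ is an irreducible representation $\sigma_v\otimes\tau_v$ of that product with compatible central characters. Using the explicit model, the $D^{\times}$-factor of $H$ acts on $\mathcal{S}(D)$ through right translation $\phi(x)\mapsto\phi(xa)$, while $G$ acts by left translation. Hence $\sigma_v\cong\pi_v$ (or its contragredient, depending on normalization of the pairing), up to the central twist. The $\GL(2)$-factor is the theta lift of $\pi_v$ to $\GO_{2,2}$, i.e.\ the Shimizu-type correspondence, which is known to coincide with $\JL_v$ (Gan--Tantono's computation for this dual pair). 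This identification $\tau_v=\JL_v(\pi_v)$ is the step I expect to be the main obstacle. The paper's earlier results do not contain it, so I would cite the Gan--Tantono/Shimizu description directly and check unramified places, where $G_v\cong\GL(2,F_v)$ and the lift is the identity, by a spherical vector computation with $\eta_v$.

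\emph{Global assembly.} Since $\mathcal{H}(G)=\otimes'_v\mathcal{H}(G_v)$ and both $\pi^{\vee}$ and $\Omega_{\mathbb{A}}$ are restricted tensor products, the tensor product factors as
\[
\pi^{\vee}\otimes_{\mathcal{H}(G)}\Omega_{\mathbb{A}}\cong\otimes'_v\bigl(\pi_v^{\vee}\otimes_{\mathcal{H}(G_v)}\Omega_v\bigr).
\]
This uses the spherical vectors $\chi_v*\eta_v$ to match the restricted-product data. The right side is $\otimes'_v(\pi_v\otimes\JL_v(\pi_v))\cong\pi\otimes\JL(\pi)$ by Badulescu's local--global compatibility. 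Alternatively, Corollary~\ref{cgtenHsim} with Theorem~\ref{tlgcomp} identifies this with the automorphic global lift $V(\pi)$, confirming automorphy. Here $D$ is anisotropic and the quotient is compact, so cuspidality and the $L^2$ condition are automatic, with nonvanishing coming from the Jacquet--Langlands existence.

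\emph{Spectral sum.} Write $L^2(G(F)\backslash G(\mathbb{A}),\chi)=\widehat{\bigoplus}_{\pi'}m(\pi')\pi'$, with $\pi'$ running over $\irr_{\mathcal{A}}(G,\chi)$ and $m(\pi')=1$ by multiplicity one for $D^{\times}$. Pass to smooth vectors, commute $\otimes_{\mathcal{H}(G)}$ with the algebraic direct sum, and apply the first part with $\pi=\pi'^{\vee}$. The contragredient of an automorphic representation with central character $\chi$ is automorphic with central character $\chi^{-1}$, via $f\mapsto\bar f$ in the unitary setting. This gives $\bigoplus_{\pi\in\irr_{\mathcal{A}}(G,\chi^{-1})}\pi\otimes\JL(\pi)$. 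Finally, take Hilbert completions, replacing $\pi^{\vee}$ by $\pi^{*}$ and $\mathcal{S}$ by $L^2$ as the paper prescribes.
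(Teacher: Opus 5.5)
\textbf{Verdict.} Your main line is correct for infinite-dimensional $\pi$ (see the caveat below). It differs from the paper's proof at the step that carries the content, namely identifying $\theta(\pi_v)$.

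\textbf{Comparison of the two routes.} The paper never computes the lift at ramified places. It uses Rallis' stable-range result to get $V(\pi)\neq 0$ and cuspidal, and Theorem~\ref{tlgcomp} to write $V(\pi)\cong\otimes_v\theta(\pi_v)=\pi'\otimes\pi''$ as an automorphic representation of $H(\mathbb{A})$. It computes only at split places, where $\theta(\pi_v)\cong\pi_v\otimes\pi_v$. Strong multiplicity one and Badulescu's global correspondence then force $\pi'\cong\pi$ and $\pi''\cong\JL(\pi)$, so the ramified identifications come out as a by-product.

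You instead import $\theta(\pi_v)\cong\pi_v\otimes\JL_v(\pi_v)$ at every place. You then assemble through the factorization of $\otimes_{\mathcal{H}(G)}$ over $\otimes'_v\mathcal{H}(G_v)$, which the paper also uses tacitly in Corollary~\ref{cgtenHsim}.

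What each route buys:
\begin{itemize}
\item Your version needs neither Theorem~\ref{tlgcomp}, nor nonvanishing or square-integrability of the global lift, nor strong multiplicity one. The price is that your local input is precisely the assertion that local JL is realized by theta. The paper extracts that from the global argument, while also exhibiting the tensor product as the automorphic space $V(\pi)$.
\item For that local input, the right reference point is not a lift to $\GO_{2,2}$. Restricted to $D^1_L\times D^1_R\times\mathrm{SL}_2$, the space $\mathcal{S}(D)$ is the Weil representation of the Shimizu pair $\mathrm{SL}_2\times\Ot(D,\nrd)$, and $\Ot(D,\nrd)$ is compact at every ramified place. The resulting see-saw with Shimizu's correspondence (as used by Gan--Takeda and Gan--Tantono) gives the identification at all places, archimedean ones included.
\end{itemize}

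\textbf{Caveat on your ``alternatively'' remark.} It is wrong as stated. Compactness of $G(F)Z(\mathbb{A})\backslash G(\mathbb{A})$ makes $\pi$ cuspidal. However, the $L^2$ hypothesis of Theorem~\ref{tlgcomp} concerns $V(\pi)$ on $H$, and the automorphic quotient of $H$ is not compact because of the $\GL(2)$ factor. Also, the existence of $\JL(\pi)$ does not by itself make the theta integral nonzero.

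\textbf{Caveat on one-dimensional $\pi$.} Take $\pi=\mu\circ\nrd$.
\begin{itemize}
\item The lift of $\pi$ to the zero space of the Witt tower is already nonzero. So by the tower property $V(\pi)$ is not cuspidal, and in fact it is not square-integrable.
\item Locally, the see-saw gives $\GL(2)$-component the discrete series $\JL_v(\mu_v\circ\nrd)$ at every ramified $v$ (e.g.\ $\mu_v\mathrm{St}_v$ at finite $v$). Badulescu's global map, by contrast, has $\JL(\mu\circ\nrd)=\mu\circ\det$.
\end{itemize}
Hence your step ``$\otimes'_v(\pi_v\otimes\JL_v(\pi_v))\cong\pi\otimes\JL(\pi)$ by Badulescu'' fails for such $\pi$, just as the paper's cuspidality claim does. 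Both arguments, and the sum in \eqref{etmain}, should be restricted to infinite-dimensional $\pi$, where the classical correspondence applies.
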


\begin{proof}
Take $\pi\in \irr_{\mathcal{A}}(G,\chi)$ and consider its big theta lift $V(\pi)$ to $H$.
Since the pair $(G,H)$ (or, equivalently, the pair of isometry groups $(\U(V_1),\U(W_2))$) is a dual pair in the stable range with $G$ the smaller member, $V(\pi)$ is nonzero and cuspidal by \cite[Proposition~3.2]{Gan2024} (see also \cite[Theorem~I.1.1]{Rallis1984}).

By Theorem~\ref{tlgcomp}, we have $V(\pi)\cong\otimes_{v}\theta(\pi_v)$.
Note that $G(F_v)$ is split for all but finitely many places $v$ of $F$.
For such a place $v$, we have
\[
H(F_v)\cong (\GL(2,F_v)\times \GL(2,F_v))/\{(t,t^{-1})\mid t\in F_v^{\times}\}.
\]
Thus, the local theta lift gives $\theta(\pi_v)\cong \pi_v\otimes\pi_v$ at split places.

Assume $\theta(\pi)=\pi'\otimes\pi''$ for cuspidal representations $\pi',\pi''$ of $D^{\times}(\mathbb{A})$ and $\GL(2,\mathbb{A})$, respectively.
Then $\pi'_v\cong\pi_v$ and $\pi''_v\cong \pi_v$ at split places.
By strong multiplicity one theorem \cite{Shapiro1979mult1} and the global Jacquet--Langlands correspondence \cite[Theorem~5.1]{Badu08}, we obtain
\[
\pi'_v\cong \pi_v\quad \text{and}\quad \pi''_v\cong \JL_v(\pi_v)\quad \text{for all } v.
\]
Equivalently, $\theta(\pi)\cong \pi\otimes \JL(\pi)$.

By Proposition~\ref{psimstb}, we have $\Theta(\pi_v)=\theta(\pi_v)$ for each $v$.
Then Corollary~\ref{cgtenHsim} yields $\pi^{\vee}\otimes_{\mathcal{H}(G)}\Omega_{\mathbb{A}}\cong\pi\otimes\JL(\pi)$.
The final decomposition follows from the multiplicity-one theorem for $D^{\times}(\mathbb{A})$ (see \cite[Theorem~5.1]{Badu08}). 
\end{proof}

\begin{remark}
Observe that the space $L^2(G(F)\backslash G(\mathbb{A}))$ decomposes with respect to central characters as
\[
L^2(G(F)\backslash G(\mathbb{A}))=\int_{\widehat{Z(F)\backslash Z(\mathbb{A})}}^{\oplus}L^2(G(F)\backslash G(\mathbb{A}),\chi)\,\dd\chi.
\]
Then Theorem~\ref{tJltenHecke} implies
\[
L^2(G(F)\backslash G(\mathbb{A}))\otimes_{\mathcal{H}(G)}L^2(D(\mathbb{A})\times \mathbb{A}^{\times})
=
\int_{\widehat{Z(F)\backslash Z(\mathbb{A})}}^{\oplus}\left(\bigoplus_{\pi\in\irr_{\mathcal{A}}(G,\chi)}\pi\otimes\JL(\pi)\right)\dd\chi.
\]
\end{remark}

\apptocmd{\thebibliography}{%
  \setlength{\itemsep}{2pt}%
  \setlength{\parsep}{0pt}%
  \setlength{\parskip}{0pt}%
}{}{}

\bibliographystyle{abbrv}
\typeout{}
\bibliography{MyLibrary} 

\noindent
\textit{E-mail address}: \href{mailto:junyang@fas.harvard.edu}{junyang@fas.harvard.edu}\\
\noindent
\textit{Address}:{ Harvard University, Cambridge, MA 02138, USA}

\end{document}